\newcommand\bs{\backslash}
\newtheorem{theorem}{Theorem}[section]
\newtheorem{proposition}[theorem]{Proposition}
\newtheorem{lemma}[theorem]{Lemma}
\theoremstyle{definition}
\newtheorem{definition}[theorem]{Definition}
\numberwithin{equation}{section}
\theoremstyle{definition}
\newtheorem{remark}[theorem]{Remark}
\newcommand{\As}{\mathscr{A}}
\newcommand{\Cs}{\mathscr{C}}
\newcommand{\calM}{\mathcal{M}}
\newcommand{\pbf}{\mathbf{p}}
\newcommand{\qbf}{\mathbf{q}}
\newcommand{\gbf}{\mathbf{g}}
\newcommand{\Bc}{\mathcal{B}}
\newcommand{\Oc}{\mathcal{O}}
\newcommand{\half}{\tfrac{1}{2}}
\newcommand{\Bb}{\mathbb{B}}
\newcommand{\Cb}{\mathbb{C}}
\newcommand{\Db}{\mathbb{D}}\newcommand{\C}{\mathbb{C}}
\newcommand{\Fb}{\mathbb{F}}
\newcommand{\F}{\mathbb{F}}
\newcommand{\G}{\Gamma}
\newcommand{\Mb}{\mathbb{M}}
\newcommand{\Pb}{\mathbb{P}}
\newcommand{\Qb}{\mathbb{Q}}
\newcommand{\Rb}{\mathbb{R}}
\newcommand{\Zb}{\mathbb{Z}}
\newcommand{\Z}{\mathbb{Z}}
\newcommand{\Sf}{\mathfrak{S}}
\newcommand{\Af}{\mathfrak{A}}
\newcommand{\beq}{\begin{eqnarray}}
\newcommand{\eeq}{\end{eqnarray}}
\newcommand\ssm{\smallsetminus}
\newcommand{\ir}{{ir}}
\newcommand{\ev}{\mathrm{ev}}
\newcommand{\Hf}{\mathfrak{H}}
\DeclareMathOperator{\Aut}{Aut}
\DeclareMathOperator{\spin}{Spin}
\DeclareMathOperator{\PGL}{PGL}
\DeclareMathOperator{\GL}{GL}
\DeclareMathOperator{\In}{Inn}
\DeclareMathOperator{\Sym}{Sym}
\DeclareMathOperator{\End}{End}
\DeclareMathOperator{\Mod}{Mod}
\DeclareMathOperator{\Orth}{O}
\DeclareMathOperator{\Out}{Out}
\DeclareMathOperator{\SO}{SO}
\DeclareMathOperator{\la}{\langle}
\DeclareMathOperator{\ra}{\rangle}
\DeclareMathOperator{\PSL}{PSL}
\DeclareMathOperator{\SL}{SL}
\title{Geometry of the Wiman-Edge pencil and the Wiman curve}
\author{Benson Farb and Eduard Looijenga \thanks{The first author was supported in part by National Science Foundation Grant Nos. DMS-1105643 and DMS-1406209. The second author is supported by the Chinese National Science Foundation. Both authors are supported by the Jump Trading Mathlab Research Fund. }}
\begin{document}
\maketitle
\begin{abstract}
The {\em Wiman-Edge pencil} is the universal family $C_t, t\in\mathcal B$ of projective, genus $6$, complex-algebraic curves admitting a faithful action of the icosahedral group $\Af_5$.  The curve $C_0$, discovered by Wiman in 1895 \cite{Wiman} and called the 
{\em Wiman curve}, is the unique smooth, genus $6$ curve admitting a faithful action of the symmetric group $\Sf_5$.  In this paper we give an explicit uniformization of $\mathcal B$ as a non-congruence quotient $\Gamma\backslash \Hf$ of the hyperbolic plane $\Hf$, where $\Gamma<\PSL_2(\Z)$ is a subgroup of index $18$. We also give modular interpretations for various aspects of this uniformization, for example for the degenerations of $C_t$ into $10$ lines (resp.\ $5$ conics) whose intersection graph is the Petersen graph (resp.\ $K_5$).  

In the second half of this paper we give an explicit arithmetic uniformization of the Wiman curve $C_0$ itself as the quotient $\Lambda\backslash \Hf$, where $\Lambda$ is a principal level $5$ subgroup of a certain ``unit spinor norm'' group of M\"{o}bius transformations.  We then prove that $C_0$ is a certain moduli space of Hodge structures, endowing it with the structure of a Shimura curve of indefinite quaternionic type.  
\end{abstract}

\tableofcontents

\section{Introduction}
In 1895 Wiman \cite{Wiman} discovered a smooth, projective curve $C_0$ of genus $6$ with automorphism group the symmetric group $\Sf_5$; indeed this is the unique such curve (see Theorem 3.3 of \cite{DFL} or Theorem~\ref{theorem:moduli0} below).   In 1981 Edge \cite{Edge} placed $C_0$ in a family 
$C_t, t\in{\mathcal B}$ (now called the {\em Wiman-Edge pencil}) of smooth, projective curves $C_t$ of genus $6$ with a faithful action of the icosahedral group $\Af_5$; it is universal among all such families (see Theorem 3.3 of \cite{DFL} or Theorem~\ref{theorem:moduli2}  below).   As explained by Edge \cite{Edge} (see \cite{DFL} for a more modern treatment), the family $C_t$ of curves appears naturally on a quintic del Pezzo surface $S$, with the Wiman curve   ``a uniquely special canonical curve of genus $6$'' on $S$:  the standard action of $\Sf_5$ on $S$ leaves $C_t$ invariant and leaves invariant exactly the curve $C_0$.  The base $\mathcal B$ of the Wiman pencil appears also as the moduli space of K3 surfaces with (a certain) faithful $\mu_2\times\Af_5$ action; see \S 5.3 of \cite{FL}.  For a number of recent papers on the Wiman-Edge pencil, see \cite{Cheltsov, CKS, DFL, FL, Za}. 

 The problem of finding uniformizations of moduli spaces is a classical one, but it is typically a difficult task.
The first main result of this paper is to give an explicit uniformization of the smooth locus of the base of the Wiman pencil. We also give modular interpretations for various aspects of this uniformization, for example for the degenerations of the family.

\begin{theorem}[{\bf Uniformization of the universal icosahedral family}]\label{theorem:moduli2} 
There exists a torsion-free subgroup $\Gamma\subset \PSL_2(\Z)$ of index $18$  such that $\Bc^\circ:=\Gamma\bs \Hf$ underlies  the base of a universal family $\Cs_{\Bc^\circ}\to\Bc^\circ$ of compact Riemann surfaces of genus $6$ endowed with a faithful $\Af_5$-action such that:
\begin{enumerate}
\item [(i)] The map which assigns to a member of $\Cs_{\Bc^\circ}\to\Bc^\circ$ its $\Af_5$-quotient is represented by the 
natural map $\Gamma\bs \Hf\to \PSL_2(\Z)\bs \Hf$. 

\medskip
\item [(ii)] The natural completion $\Bc$ of $\Bc^\circ$ is of genus zero and has five cusps, one of width $2$,  two of width $3$ and  two of  width $5$.

\medskip
\item [(iii)]  The family $\Cs_{\Bc^\circ}\to\Bc^\circ$ extends naturally to a Deligne-Mumford stable family $\Cs_{\Bc} \to \Bc$ of $\Af_5$-curves
such that the fiber over each cusp  has (after normalization) only rational irreducible components. It is one of
the following types, labeled  by its  dual intersection graph (see Figure \ref{figure:degenerations}):
\medskip

\begin{figure}[h]
\begin{subfigure}{.5\textwidth}\hspace{.8in}
\includegraphics[scale=0.40]{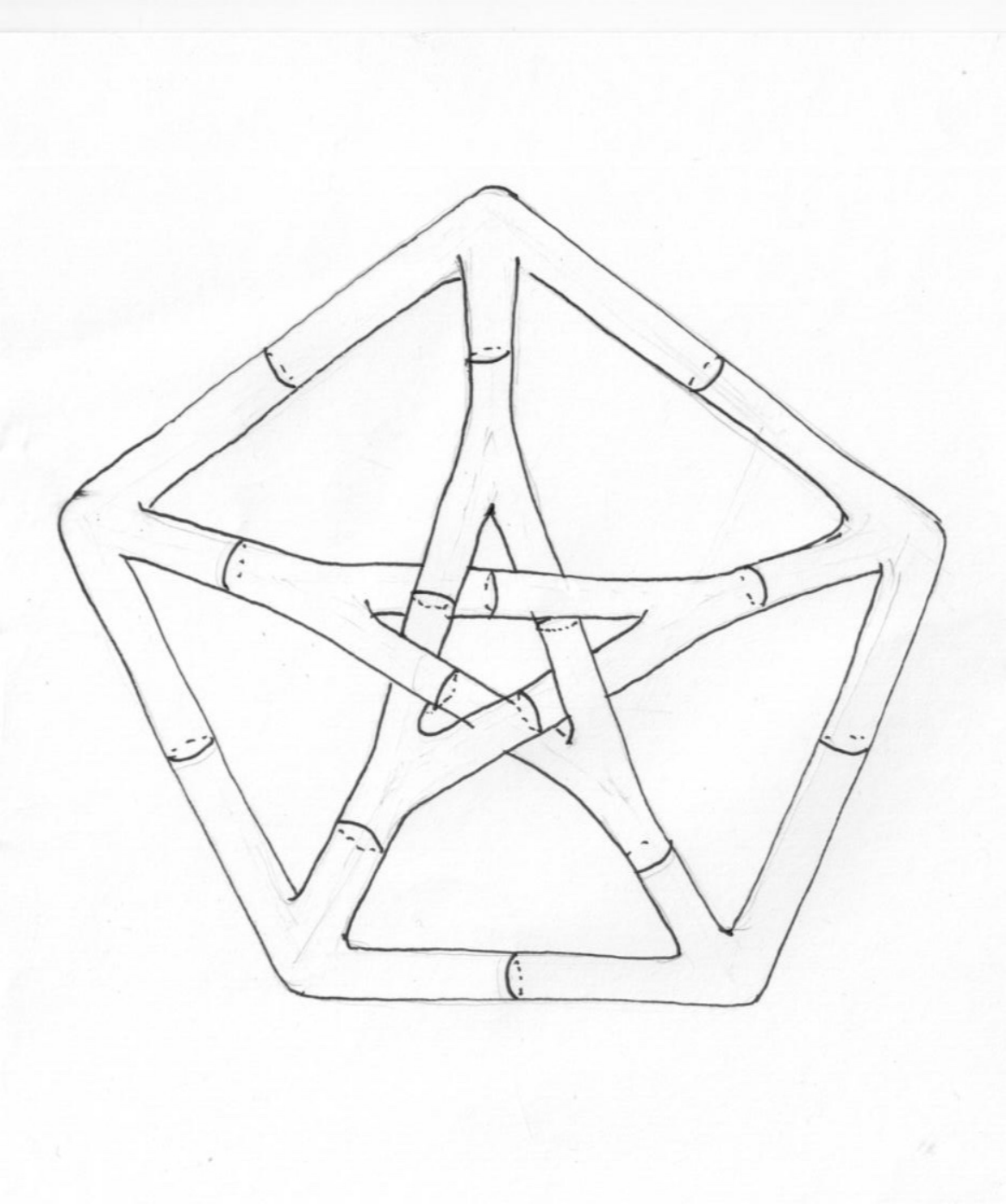}
\end{subfigure}
\begin{subfigure}{.5\textwidth}
\includegraphics[scale=0.45]{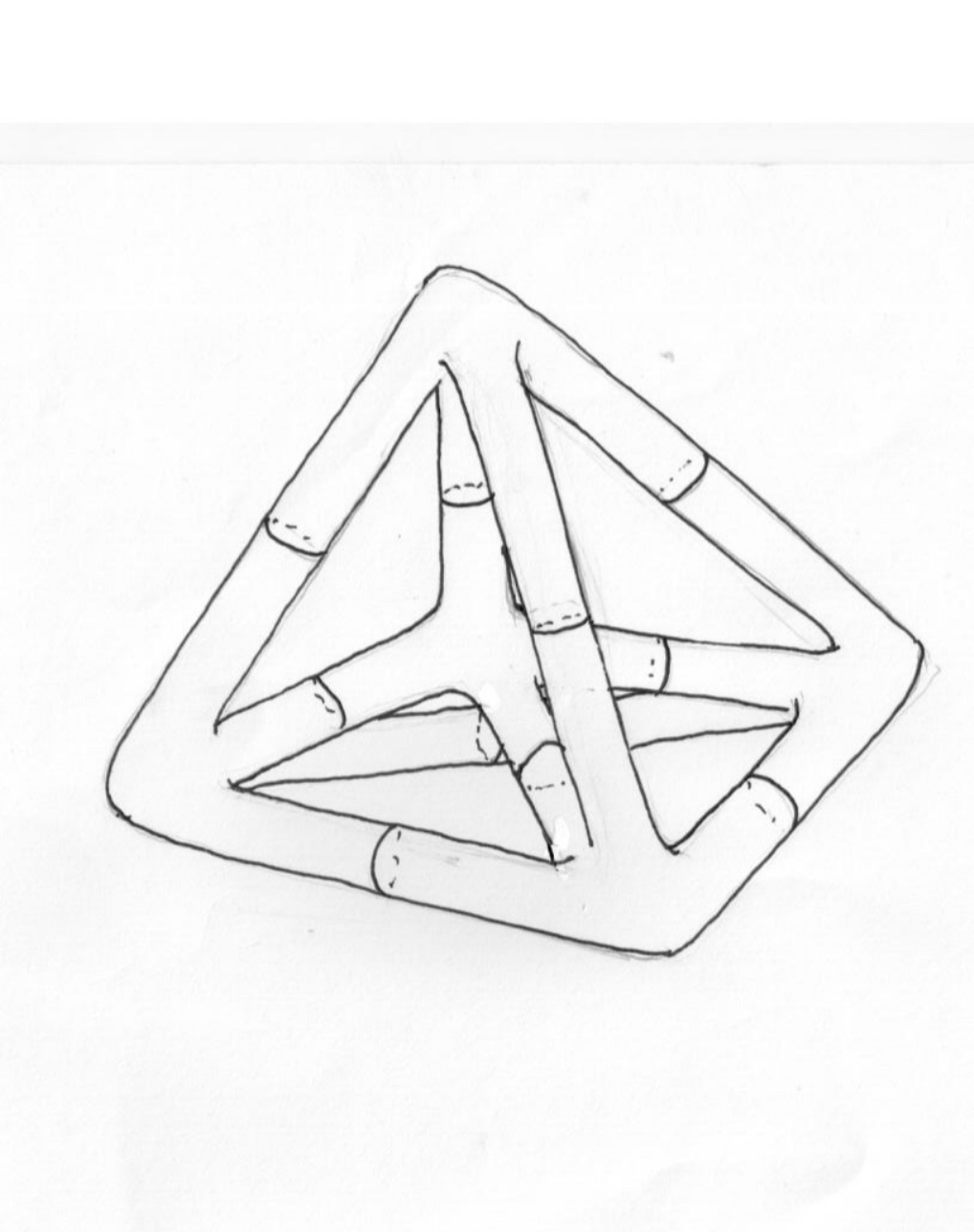}
\end{subfigure}
\caption{\footnotesize Two  genus $6$ surfaces with $\Af_5$-symmetry. The embedded circles are the vanishing cycles for the Petersen degeneration (cusp width $2$)  resp.\  the $K_5$ degeneration (cusp width $3$).}
\label{figure:degenerations}
\end{figure}

\begin{description}
\item[Petersen curve:] The Petersen graph (cusp width 2). 
\smallskip

\item[$K_5$ curve:] The complete graph on 5 vertices (cusp width 3).
\smallskip

\item[Dodecahedral curve:] The graph with one vertex and six loops attached (cusp width 5).
\end{description}
\medskip

\medskip
\item [(iv)] 
The involution $\iota$ of $\Gamma\subset \PSL_2(\Z)$ coming from the unique nontrivial outer automorphism of $\Af_5$ is defined by conjugation with an element of order two in $\PSL_2(\Z)$ which normalizes $\Gamma$; it exchanges the  two cusps of the same width  (see  Figure \ref{fig:baseWEpencil}).
\end{enumerate}
\end{theorem}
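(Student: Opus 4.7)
The plan is to realize $\Bc^\circ$ as the moduli space of pairs $(C,\alpha)$, where $C$ is a compact Riemann surface of genus $6$ and $\alpha\colon\Af_5\into\Aut(C)$ is a faithful action. Riemann-Hurwitz applied to the quotient $C\to C/\alpha(\Af_5)$, together with the fact that nontrivial elements of $\Af_5$ have orders only $2,3,5$, forces the quotient orbifold to be $\Pb^1$ of signature $(2,2,2,3)$: three cone points of order $2$ and one of order $3$ (a direct arithmetic check rules out every other combination). The moduli space $\M_{0,(2,2,2,3)}$ of such orbifolds coincides with $\PSL_2(\Z)\bs\Hf=\M_{1,1}$ as an analytic orbifold, since both have the $j$-line as coarse moduli and carry the same orbifold points (orders $2,3$) and single cusp; concretely, $\M_{0,(2,2,2,3)} = \M_{0,4}/S_3$ and $S_3\iso\PSL_2(\Z)/\Gamma(2)$. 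The forgetful map $(C,\alpha)\mapsto C/\alpha(\Af_5)$ produces the desired holomorphic map $\Bc^\circ\to\PSL_2(\Z)\bs\Hf$, establishing (i) once the covering group is pinned down.

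To compute the degree, I count iso classes of faithful $\Af_5$-Galois covers of a fixed generic $(2,2,2,3)$-orbifold $Y$. These are $\Af_5$-conjugacy classes of surjective homomorphisms $\pi_1^{\mathrm{orb}}(Y)=\la x_1,x_2,x_3,x_4\mid x_1^2=x_2^2=x_3^2=x_4^3=x_1x_2x_3x_4=1\ra\onto\Af_5$, i.e., $\Af_5$-orbits of $4$-tuples $(a,b,c,d)$ with $a,b,c$ in the involution class $C_2$ (of size $15$), $d$ in the $3$-cycle class $C_3$ (of size $20$), $abcd=1$, and $\la a,b,c,d\ra=\Af_5$. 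The Frobenius formula gives the unrestricted count
\[
\frac{15^3\cdot20}{60}\sum_{\chi\in\mathrm{Irr}(\Af_5)}\frac{\chi(C_2)^3\,\chi(C_3)}{\chi(1)^2}\;=\;1125\cdot\tfrac{24}{25}\;=\;1080.
\]
The only proper subgroups of $\Af_5$ meeting both conjugacy classes are copies of $S_3$ and $\Af_4$, and the analogous character sums in these subgroups vanish identically; hence all $1080$ tuples are surjective, and dividing by $|\Af_5|=60$ gives exactly $18$ iso classes. So $[\PSL_2(\Z):\Gamma]=18$.

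Torsion-freeness of $\Gamma$ amounts to the assertion that the two elliptic elements of $\PSL_2(\Z)$ (of orders $2$ and $3$) act freely on the $18$-element fiber. A fixed point for the order-$2$ element would correspond to a tuple admitting an element $g\in\Af_5$ swapping two of $\{a,b,c\}$ by conjugation while centralizing $d$; but $Z_{\Af_5}(d)=\la d\ra$ is cyclic of order $3$ and contains no involution. For the order-$3$ element, any such $g$ lies in $\la d\ra$; combined with $abcd=1$ this forces $(ad)^3=d^2$, which has no solution with $a\in C_2$ (inspecting the possible orders $2,3,5$ of $ad$ and comparing to the order $3$ of $d^2$).

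For parts (ii) and (iii), $\Bc$ is obtained by adding one cusp per orbit of the cusp stabilizer $\la T\ra\subset\PSL_2(\Z)$ on the $18$-element fiber, with widths equal to the orbit sizes. The generator $T$ acts on the $4$-tuple as a Hurwitz braid move (a half-twist between two of the order-$2$ cone points); enumerating orbits yields the decomposition $18=2+3+3+5+5$, producing five cusps. Stable reduction at each cusp, via admissible-cover theory, identifies the corresponding Deligne--Mumford stable $\Af_5$-curve: the width-$2$ cusp gives the Petersen curve, the two width-$3$ cusps give $K_5$ curves, and the two width-$5$ cusps give dodecahedral curves, with dual graphs accounted for by the $\Af_5$-orbit structure on components and nodes. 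For (iv), the outer automorphism of $\Af_5$ permutes the $18$ tuples, fixing the unique width-$2$ orbit and exchanging the two orbits of width $3$ as well as the two orbits of width $5$; this involution is realized by conjugation with an order-$2$ element of $\PSL_2(\Z)$ normalizing $\Gamma$. The main obstacle will be the cusp analysis: matching each of the five $T$-orbits concretely to a stable $\Af_5$-curve with the correct Petersen, $K_5$, or dodecahedral dual graph, using an explicit description of the vanishing cycles as the $(2,2,2,3)$-orbifold degenerates.
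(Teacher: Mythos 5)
Your Frobenius-formula count of $18$ is correct and constitutes a genuinely different route from the paper's for the degree computation. The paper instead proves the count by an explicit combinatorial enumeration (Lemma~\ref{lemma:a5}, Lemma~\ref{lemma:a5'}, Proposition~\ref{prop:a5}): it classifies the $\Af_5$-conjugacy classes of pairs $(g_1,g_2)$ of orders $(3,2)$ into three types according to the order $r=2,3,5$ of $g_1g_2$, then counts the completions $(g_3,g_4)$ with $g_3g_4=(g_1g_2)^{-1}$ — getting $2+6+10=18$ — in a way that simultaneously reveals the $\langle t_\gamma\rangle$-orbit structure $18 = 2 + 3 + 3 + 5 + 5$. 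Your character-theoretic computation is cleaner for the total degree, but buys you only the number $18$; the paper's explicit bookkeeping is the machine that drives everything else.

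That leads to the main gap: the cusp-width decomposition and the dual graph identification are the substantive content of parts (ii) and (iii), and these you only assert. The cusp widths require computing the orbit sizes of the Dehn half-twist acting on the $18$ conjugacy classes of $4$-tuples; the paper does this in Proposition~\ref{prop:a5} (with Lemma~\ref{lemma:a5'} providing the key step that, for fixed $h=(g_1g_2)^{-1}$ of order $r$, the pairs $(g_3,g_4)$ with $g_3g_4=h$ form a free $\langle h\rangle$-orbit of size $r$ when $r=3,5$, and a $2$-element set when $r=2$). The dual graphs are then identified in Theorem~\ref{theorem:moduli1} using the coset description: vertices are left cosets of $\langle g_1,g_2\rangle$, edges are left cosets of $\langle g_1g_2\rangle$, with incidence given by inclusion. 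You acknowledge the cusp analysis as "the main obstacle," and it is — without it, (ii), (iii) and (iv) are unproven.

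The torsion-freeness argument also differs from the paper and as stated is not quite rigorous. The paper dispatches torsion-freeness with a short geometric observation: local charts of $\Bc^\circ$ are given by a total order on the four orbifold points of $P$ and taking their cross-ratio, which shows at once that $\Bc^\circ\to\PSL_2(\Z)\bs\Hf$ resolves the orbifold points of the target. Your approach tries to rule out fixed points of the elliptic elements by hand. For the order-$2$ case, the phrase "swapping two of $\{a,b,c\}$ by conjugation" glosses over the fact that the relevant braid move is a half-twist that conjugates as well as swaps; the correct fixed-point condition gives an element $g$ centralizing both the fixed order-$2$ entry $c$ and $d$, and one concludes $g\in Z(c)\cap Z(d)=\{1\}$ since $|Z(c)|=4$ and $|Z(d)|=3$ are coprime, not merely that $Z(d)$ has no involution. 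For the order-$3$ case, your relation $(ad)^3=d^2$ does follow if one writes out the cyclic braid action and uses $g\in\langle d\rangle$, but this deduction is not carried out. Both arguments can be made correct, but as written they omit exactly the part where one must control the conjugating words introduced by the braid moves.
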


As we explain in Remark~\ref{rem:noncongruence} below, the group $\Gamma$ in Theorem~\ref{theorem:moduli2} is not a congruence subgroup.  We prove Theorem~\ref{theorem:moduli2} in \S\ref{sect:hypgeom}.  

Our next main result is the content of \S\ref{sect:combmodel}, where we find explicit affine, piecewise-Euclidean and hyperbolic metrics on each member of the family $C_t$.   These metrics allow us to view the $5$ degenerations into singular members of the family. One example of a piecewise-affine metric on a member of $C_t$ is given in Figure~\ref{figure:Dodec1}.

\begin{figure}
\centerline{\qquad\qquad\includegraphics[scale=0.3]{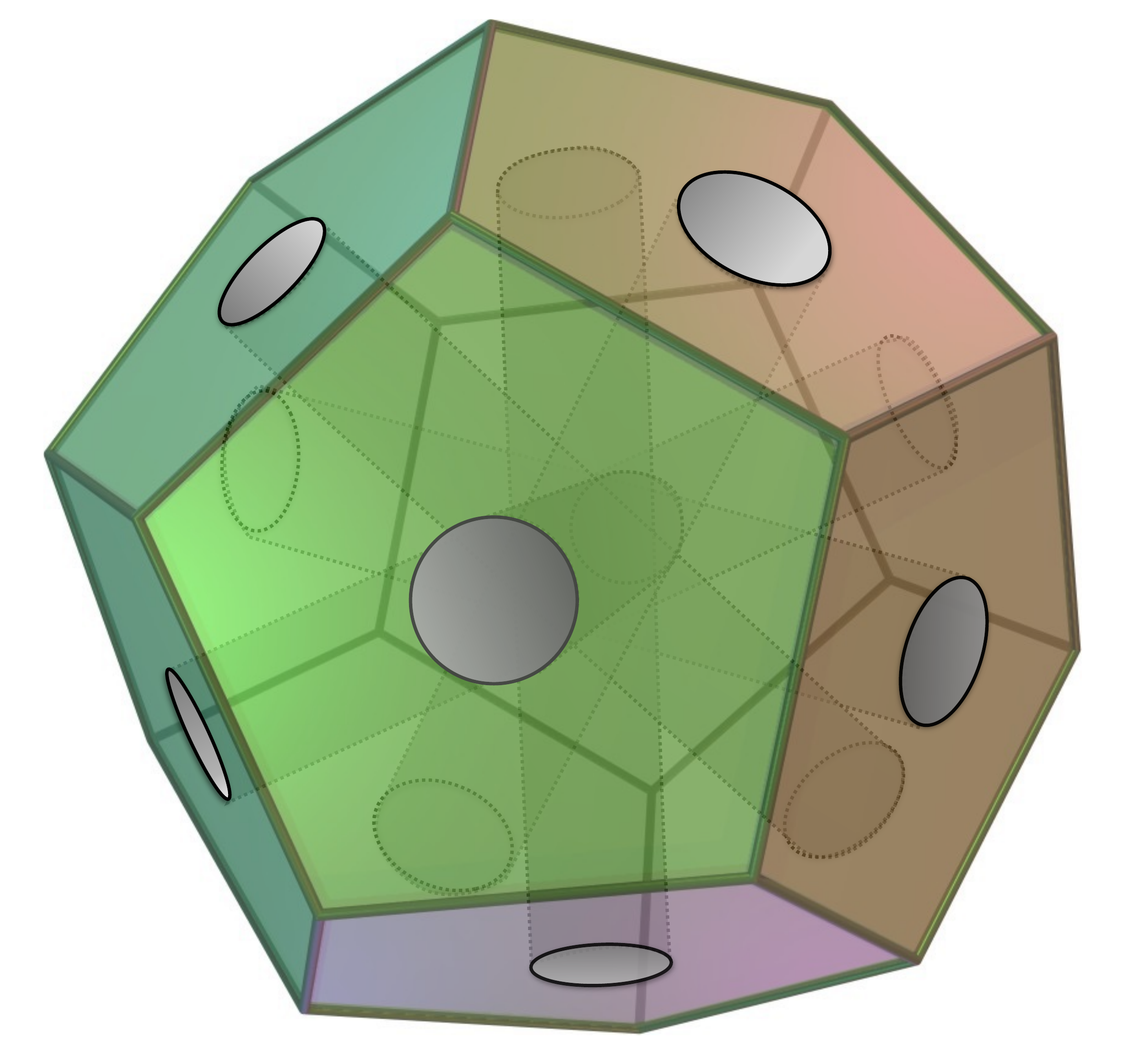}}
\caption{\small  A genus $6$ surface with $\Af_5$-action obtained as 
a dodecahedron with interior disks removed and opposite  boundary components  identified  (as suggested by the cylinders). 
The piecewise-linear metric determines a  conformal structure without singularities. The six antipodal pairs of boundary components
define as many vanishing cycles of a degeneration  into an irreducible  stable $\Af_5$-surface with an irreducible singular fiber (cusp width $5$).}\label{figure:Dodec1}
\end{figure}

The second half of this paper concerns the Wiman curve $C_0$ itself; again, this is the unique smooth, genus $6$ curve with faithful $\Sf_5$ action.  In \S\ref{section:triangle} we determine $C_0$ as a hyperbolic triangle group, and we prove that the Wiman curve is actually an {\em arithmetic curve}, that is, the quotient of the hyperbolic disk by an arithmetic lattice.  Since the details are somewhat involved, we give here only a  rough statement; see Theorem~\ref{theorem:wiman:spinor3} for an exact statement. 

\begin{theorem}[{\bf Arithmeticity of the Wiman curve}]
The Wiman curve is biholomorphic to the quotient of the hyperbolic disk by the principal level $5$ subgroup of a certain ``unit spinor norm'' group of M\"{o}bius transformations.
\end{theorem}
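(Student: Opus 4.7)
The plan is to realize $C_0$ as a quotient $K\backslash\Hf$ where $K$ is the fundamental group of $C_0$, to identify the Fuchsian triangle group $\Delta \supset K$ as the ``unit spinor norm'' group of an explicit arithmetic datum, and finally to recognize $K$ as the principal level~$5$ congruence subgroup of $\Delta$. The first step is to pin down the signature of the Fuchsian uniformization. The $\Sf_5$-action on $C_0$ is faithful with $|\Sf_5|=120$, and Riemann--Hurwitz forces $C_0/\Sf_5$ to have genus~$0$ (higher quotient genus leaves too little room for ramification terms, which contribute at least $60$ each to the defect). Then
\[
\frac{25}{12}\;=\;\sum_{i}\Bigl(1-\tfrac{1}{e_i}\Bigr),
\]
and the only solution with three terms, each $e_i$ an order of an element of $\Sf_5$ (so $e_i\in\{2,3,4,5,6\}$), is $(e_1,e_2,e_3)=(2,4,6)$. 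Hence $C_0\cong K\backslash\Hf$, where $K$ is the kernel of a surjection $\Delta(2,4,6)\onto\Sf_5$ sending the three elliptic generators to, respectively, a transposition, a $4$-cycle, and an element of the form $(i\,j)(k\,\ell\,m)$.

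Second, I would identify $\Delta(2,4,6)$ with an arithmetic Fuchsian group of quaternionic type. The triangle group $\Delta(2,4,6)$ appears on Takeuchi's list of arithmetic triangle groups: a short calculation ($\operatorname{tr}(\gamma^2)=2\cos(2\pi/e_i)$ gives values $-2,0,1$) shows that the invariant trace field is $\Q$, and the invariant quaternion algebra $B/\Q$ is an indefinite division algebra ramified at exactly two finite primes, neither of which is~$5$. Choosing a maximal order $\Os\subset B$, the spin representation $B^\times\onto\Orth^+(q)$ on the trace-zero part $q$ of the reduced-norm form identifies the image in $\PSL_2(\R)$ of $\{g\in\Os^\times:\operatorname{nrd}(g)\in\Z^\times\}/\{\pm 1\}$~--~the ``unit spinor norm'' group~--~with $\Delta(2,4,6)$.

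Third, I would identify $K$ with the principal level~$5$ subgroup $\Lambda\subset\Delta(2,4,6)$. Since $5$ splits in $B$, the residue $\Os/\mathfrak{p}\cong M_2(\mathbb{F}_5)$ at a prime $\mathfrak{p}\mid 5$ defines a reduction map $\Delta(2,4,6)\to\PGL_2(\mathbb{F}_5)\cong\Sf_5$ whose kernel is by definition $\Lambda$. Strong approximation away from the ramified primes of $B$ gives surjectivity, so $[\Delta(2,4,6):\Lambda]=120=|\Sf_5|$. A check that the three elliptic generators reduce to a transposition, a $4$-cycle, and an element of type $(i\,j)(k\,\ell\,m)$ in $\PGL_2(\mathbb{F}_5)$~--~unambiguously because $\Out(\Sf_5)=1$~--~identifies the two surjections $\Delta(2,4,6)\onto\Sf_5$ with kernels $K$ and $\Lambda$, forcing $K=\Lambda$ and hence $C_0\cong\Lambda\backslash\Hf$. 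The Shimura-curve interpretation as a moduli space of polarized Hodge structures with $\Os$-multiplication then follows from the general formalism attached to indefinite quaternion algebras over $\Q$.

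The principal obstacle lies in the arithmetic bookkeeping of the second step: pinning down the precise pair of finite primes at which $B$ ramifies (and confirming $5$ is not among them), choosing the correct maximal order so that its unit spinor norm group equals $\Delta(2,4,6)$ rather than merely a commensurable group, and tracing the elliptic generators through the spin representation and mod-$\mathfrak{p}$ reduction to confirm that they land in the prescribed $\Sf_5$-conjugacy classes. Once these identifications are in place, the level-$5$ statement is forced by the numerical coincidence $|\PGL_2(\mathbb{F}_5)|=120=|\Sf_5|$ together with the rigidity of an $\Sf_5$-action on a genus~$6$ surface with quotient signature $(2,4,6)$.
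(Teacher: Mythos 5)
Your overall strategy matches the paper's: realize $C_0=K\backslash\Hf$ for $K$ the kernel of a surjection $\Delta^+(6,4,2)\to\Sf_5$, exhibit $\Delta^+(6,4,2)$ as an arithmetic group, produce a mod-$5$ reduction $\Delta^+(6,4,2)\to\PGL_2(\Fb_5)\cong\Sf_5$ whose kernel is the principal level-$5$ subgroup, and then match the two kernels by uniqueness of the genus-$6$ curve with $\Sf_5$-action. The packaging differs: you appeal to Takeuchi's classification of arithmetic triangle groups, a maximal order in a quaternion algebra, and strong approximation; the paper instead constructs an explicit even integral ternary lattice $(L,q)$ (discriminant $-12$) from the Tits reflection representation by scaling the standard Coxeter basis to root vectors $\alpha_1,\alpha_2,\alpha_3$, invokes Vinberg's theory to identify $\Delta^+(6,4,2)=\SO^\#(q)$ as the spinor-norm kernel, reduces the conic $K_q$ mod $5$ to get the map to $\PGL_2(\Fb_5)$, and directly checks the kernel is the principal congruence subgroup (Lemma~\ref{lemma:homom3}). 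These are two Morita-equivalent ways of describing the same arithmetic group; the paper's lattice-theoretic route has the advantage that the ``arithmetic bookkeeping'' you flag as the principal obstacle becomes a direct computation.

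Two concrete gaps. First, your Riemann--Hurwitz derivation that the quotient signature is $(2,4,6)$ is incomplete: the numerical equation $\sum(1-1/e_i)=25/12$ with three terms, $e_i$ orders of elements of $\Sf_5$, also has the solution $(3,3,4)$. That case must be excluded separately (e.g.\ by parity: two $3$-cycles have even product, which cannot equal the inverse of a $4$-cycle). Second, the key step---that the three elliptic generators of $\Delta^+(6,4,2)$ reduce mod $5$ to elements of $\Sf_5$ of the prescribed orders $6,4,2$ that generate all of $\Sf_5$---is exactly where you stop, labeling it an ``obstacle.'' The paper handles this with two short observations you would need to supply: by Serre's lemma, a finite-order element of $\GL_n(\Zb)$ has the same order after reduction mod any $\ell\ge 3$, which pins down the orders of the images of $g_1,g_2,g_3$; generation then needs one more computation (the paper checks $(s_1s_2s_3)^2$ maps to an element of order $5$, ruling out the proper subgroup $\Sf_3\times\Sf_2$). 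Also, your phrase ``unambiguously because $\Out(\Sf_5)=1$'' slightly misattributes the needed uniqueness: triviality of $\Out(\Sf_5)$ only reduces ``up to automorphism'' to ``up to conjugation''; what you actually need is that \emph{all} generating triples of prescribed orders $(6,4,2)$ with product $1$ form a single conjugacy orbit, which is the content of the paper's Lemma~\ref{lemma:s5} and is what guarantees $K=\Lambda$.
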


In \S\ref{section:modular} we give a modular interpretation to the Wiman curve $C_0$ itself as a certain moduli space of Hodge structures.  We use this as well as the arithmetic description of $C_0$ to prove the following (see Theorem~\ref{theorem:wiman:modular4a} for a precise statement).

\begin{theorem}[{\bf Modular description of the Wiman curve}]\label{theorem:wiman:modular4}
The Wiman curve naturally supports a family of
abelian surfaces $A$ for which  $\End(A)$ is an indefinite quaternion algebra, which is endowed with an isomorphism $\Fb_5\otimes \End(A)$ with a fixed  $\Fb_5$-algebra and for which $H_1(A; \Zb)$ is a principal $\End(A)$-module. This gives the Wiman curve the structure of a Shimura curve of indefinite quaternionic type.
\end{theorem}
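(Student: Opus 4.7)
The plan is to combine the arithmetic description provided by Theorem~\ref{theorem:wiman:spinor3} with Shimura's classical construction of moduli of abelian surfaces as arithmetic quotients of $\Hf$. The starting point is the identification $C_0 \cong \Lambda\bs \Hf$, where $\Lambda$ is the principal level $5$ subgroup of the unit spinor norm group of a specific rational ternary quadratic form of signature $(2,1)$.

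The first step is to invoke the low-dimensional exceptional isomorphism. The even Clifford algebra of a rational ternary form of signature $(2,1)$ is an indefinite quaternion algebra $B$ over $\Qb$, and the spin group is identified with the multiplicative group of reduced-norm-$1$ elements of $B$. Under this identification the spinor norm becomes the reduced norm, so the unit spinor norm group of Theorem~\ref{theorem:wiman:spinor3} corresponds to an arithmetic subgroup of $\Oc^\times/\{\pm 1\}$ for a distinguished order $\Oc \subset B$, acting on $\Hf$ via a fixed isomorphism $B\otimes_\Qb \Rb \cong M_2(\Rb)$.

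With this translation in hand, Shimura's construction supplies the modular interpretation. To a point $z \in \Hf$ one attaches the polarizable complex torus $A_z := (B\otimes_\Qb\Rb)/\Oc$, where the complex structure on $B\otimes_\Qb\Rb$ is determined by $z$ through the chosen isomorphism with $M_2(\Rb)$. Left multiplication of $\Oc$ on itself embeds $\Oc$ into $\End(A_z)$, with equality at the generic point, and tautologically $H_1(A_z;\Zb) \cong \Oc$ is a free rank-one $\End(A_z)$-module. Passing from the full unit group to the principal level $5$ congruence subgroup $\Lambda$ enriches the moduli problem with an $\Oc/5\Oc$-module isomorphism $A_z[5] \cong \Oc/5\Oc$; on the endomorphism side this is precisely the isomorphism of $\Fb_5\otimes\End(A_z)$ with a fixed reference $\Fb_5$-algebra appearing in the theorem. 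The resulting universal family over $\Lambda\bs\Hf$ then endows $C_0$ with the structure of a Shimura curve of indefinite quaternionic type.

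The main obstacle, and the step that requires the most care, is the identification carried out in the second paragraph: pinning down the specific quaternion algebra $B$ and order $\Oc$ arising from the explicit ternary form underlying the unit spinor norm group of Theorem~\ref{theorem:wiman:spinor3}. Concretely, one must compute the discriminant of the form in order to determine the ramification set of $B$, verify that the relevant order is maximal (so that "principal $\End(A)$-module" is the correct condition), and check that the principal level $5$ condition for the Möbius group translates exactly to the level $5$ congruence subgroup inside $\Oc^\times/\{\pm 1\}$. Once these arithmetic compatibilities are in place, both the moduli-theoretic statement and the Shimura-curve structure follow from standard theory.
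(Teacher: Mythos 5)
Your approach is in essence the one the paper follows: in both cases the engine is the identification of the even Clifford algebra $C_+(q)$ of the ternary form $q$ underlying $\Delta^+(6,4,2)=\SO^\#(q)$ as an indefinite quaternion order, so that $\spin(q)$ becomes the group of norm-$1$ units, and the abelian surfaces are then built on $C_+(q)_\Rb$ with a $z$-dependent complex structure, with left multiplication providing the quaternionic endomorphisms and the identity element providing the principality of $H_1$. Where you defer to ``Shimura's standard theory'' the paper works this out by hand: it proves $C_+(q)$ is a division algebra (using that the conic $K_q$ has no rational point, established by reduction mod $3$), identifies the complex structure $J_f$ as right multiplication by $j_f=e'e''$ and shows (Lemma~\ref{lemma:cxstructure}) that these are the only $C_+(q)_\Rb$-linear complex structures up to sign, and verifies directly that the hermitian form $h(a,b)=ab^*$ contributes a rank-$3$ piece to the Neron--Severi group. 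So the two arguments are the same proof at different levels of explicitness, and the ``main obstacle'' you isolate --- computing the ramification set of $B$ and verifying maximality of $\Oc$ --- is actually sidestepped in the paper: it works intrinsically with the order $C_+(q)$ without ever identifying $B$'s ramified primes, and maximality of the order is not needed for the statement that $H_1(A;\Zb)$ is a principal (i.e., free rank-one) $\End(A)=C_+(q)$-module.

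Two small inaccuracies worth flagging. First, you equate the level-$5$ structure with an $\Oc/5\Oc$-module isomorphism $A_z[5]\cong\Oc/5\Oc$ and then assert that this is ``precisely'' the mod-$5$ endomorphism isomorphism of the theorem; these are not the same datum. The preimage $\hat\Gamma_5$ of $\Gamma_5$ in $\spin(q)$ contains $-1$ and acts on the marking of $H_1\otimes\Fb_5$ by right multiplication by $\pm1$, so it does not fix a $5$-torsion level structure but does fix the induced isomorphism on $\End\otimes\Fb_5$. The paper handles exactly this by passing to the quotient $\{\pm1\}\bs\As_{C_0}$ and describing the moduli stack as one of Kummer surfaces with the weaker, endomorphism-side level structure (Theorem~\ref{theorem:wiman:modular4a}). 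Second, as noted above, the parenthetical ``so that `principal $\End(A)$-module' is the correct condition'' misattributes the role of maximality: principality is a statement about $H_1$ as a module over whatever order $\End(A)$ is, and is meaningful and correct here regardless of whether $C_+(q)$ is maximal.
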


\bigskip
\noindent
{\bf Acknowledgements. } The present paper grew out of our joint work \cite{DFL} with Igor Dolgachev.  It is a pleasure to thank Igor for sharing with us his knowledge and insights on this topic. We also thank Amie Wilkinson for making Figure~\ref{figure:Dodec1}.

\section{Riemann surfaces with $\Af_5$-action and their orbifolds}\label{sect:hypgeom}

\subsection{Reconstructing a surface with symmetry from its orbifold}
This section is closely related to the material in \S 3.2 of \cite{DFL}, but we take here  an orbifold point of view.  This is in a sense a bottom up approach. We will use the language of branched covers and $2$-dimensional orbifolds; see, e.g.,\ \S 2 of 
 \cite{Sc}.  Denote by $(g;m_1,\ldots ,m_r)$ a complex-analytic  (or hyperbolic when specified) orbifold of genus $g$ with $r$ orbifold points of orders 
 $m_1, \dots, m_r$.  We use the classical terminology and call this the {\em genus} of the orbifold. 
 
Our point of departure is  Proposition 3.2 of \cite{DFL}, where we observe that this  proposition is essentially topological in nature. 
It says among other things that  if $C$ is compact Riemann surface of genus $6$ endowed with a faithful action of the alternating group $\Af_5$ (resp.\  the symmetric group $\Sf_5$), 
then it gives rise to an orbifold of  type $(0; 3,2,2,2)$ (resp.\  $(0; 6,4,2)$). We also noted that in the case of an $\Sf_5$-action, the passage from the 
$\Af_5$-orbit space to the  $\Sf_5$-orbit space defines a degree 2 cover from an  orbifold  of type $(0; 3,2,2,2)$ to one
of type $(0; 6,4,2)$ which ramifies over the orbifold points of order $6$ and $4$. 

We investigate here to what extent the orbifold determines the Riemann surface with its group action.  Our findings in this section are summed up in Theorem~\ref{theorem:moduli2} in the introduction.  We will deduce this theorem from two results.  The first reproves in an elementary fashion the uniqueness assertion of Corollary 3.6 of \cite{DFL}.

\begin{theorem}[{\bf Genus $6$ surfaces with $\Sf_5$ action}]\label{theorem:moduli0} 
There is, up to covering transformations, exactly one compact connected Riemann surface of genus $6$ endowed with a faithful $\Sf_5$-action.  
\end{theorem}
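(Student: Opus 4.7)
The plan is to exploit the orbifold input from Proposition~3.2 of \cite{DFL} to convert the classification problem into a finite count of homomorphisms from a triangle group, which then boils down to a short calculation inside $\Sf_5$.

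First, by Proposition~3.2 of \cite{DFL}, a faithful $\Sf_5$-action on a compact Riemann surface $C$ of genus $6$ forces the quotient $\Sf_5\bs C$ to be an orbifold of type $(0;6,4,2)$. The hyperbolic $(0;6,4,2)$ orbifold is rigid (its Teichm\"uller space is a point), so its conformal structure is unique. Hence any such $C$ is determined, up to $\Sf_5$-equivariant biholomorphism, by an $\Sf_5$-equivariant isomorphism class of (orbifold) Galois covers of this fixed orbifold, which in turn corresponds to an epimorphism
\[
\rho\colon \Delta := \la\, x,y,z \mid x^{6} = y^{4} = z^{2} = xyz = 1\,\ra \twoheadrightarrow \Sf_5
\]
for which $\rho(x),\rho(y),\rho(z)$ have the full orders $6,4,2$ (so that the cover is a surface rather than an orbifold), taken up to post-composition by $\Aut(\Sf_5)$. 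Since $\Out(\Sf_5)=1$, this equivalence is simply conjugation in $\Sf_5$.

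The count is then a short hand computation. Writing $a:=\rho(x)$, $b:=\rho(y)$ and $c:=(ab)^{-1}=\rho(z)$, I use conjugation to fix $b=(1234)$, whose centralizer in $\Sf_5$ is $\la b\ra$ of order~$4$. Of the $20$ elements of order $6$ in $\Sf_5$ (the $(2,3)$-cycles), I enumerate those $a$ for which $ab$ has order $2$. A direct case check produces exactly four candidates, and conjugation by $b$ cycles them through one another, so $\la b\ra$ permutes them freely and transitively; there is therefore a unique conjugacy class of pairs $(a,b)$. One representative is $a=(12)(345),\ b=(1234)$: here $a^3=(12)$ together with its $b$-conjugates $(23),(34)$ generates $\Sf_4$ on $\{1,2,3,4\}$, while $a^2=(354)$ pulls in the fifth point, so indeed $\la a,b\ra=\Sf_5$. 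Existence of at least one such $C$ is classical (Wiman), and a Riemann--Hurwitz check with orbifold Euler characteristic $-1+\tfrac16+\tfrac14+\tfrac12 = -\tfrac{1}{12}$ reproduces the genus: $120\cdot(-\tfrac{1}{12})=-10$.

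The main obstacle is bookkeeping rather than conceptual: one has to enumerate the $(2,3)$-cycles against a fixed $4$-cycle $b$, but organizing the search by the $\la b\ra$-orbit structure trims the work to a handful of cases. The rigidity of the $(0;6,4,2)$ orbifold and the vanishing of $\Out(\Sf_5)$ are what make the reduction to a pure $\Sf_5$ computation lossless, so once the above enumeration is carried out there is nothing further to check.
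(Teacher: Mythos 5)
Your proof is correct and takes essentially the same route as the paper's: both reduce the classification to counting, up to simultaneous $\Sf_5$-conjugation (valid since $\Sf_5$ is centerless with $\Out(\Sf_5)=1$), the generating triples $(g_1,g_2,g_3)$ with $g_1g_2g_3=1$ of orders $(6,4,2)$. The only difference is a normalization choice in the finite count: the paper fixes the order-$6$ element $g_1$ and sifts $4$-cycles by $Z(g_1)$-conjugacy, whereas you fix the $4$-cycle $b$ and sift order-$6$ elements by $Z(b)$-conjugacy, finding a single free $\langle b\rangle$-orbit of size four; both computations yield the same unique conjugacy class of triples.
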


The surface whose uniqueness and existence it asserts is called the \emph{Wiman curve}, and we shall denote it by $C_0$.   

In order to state the other two results, let  $(P; q_1,q_2,q_3, q_4)$ be an orbifold of type $(0; 3,2,2,2)$. We first address the question `How many Riemann surfaces of genus $6$ with faithful $\Af_5$-action give rise to this orbifold?'. In order to state our answer, we fix  an embedded segment $\gamma$ in $P\ssm \{q_1, q_2\}$ connecting the order 2 orbifold points $q_3$ and $q_4$.  The embedded segments in $P\ssm \gamma$ connecting $q_1$ with $q_2$ then belong to a single isotopy class; 
let $\alpha$ be such a segment. Clearly, $\alpha$ is a deformation retract of $P\ssm \gamma$.

Let $f:C\to P$  be any connected \emph{smooth} $\Af_5$-covering; by smooth we mean here that $f$ `resolves' the orbifold points in the sense that the order of the orbifold point is the order of ramification of $f$ over this point.  Then $f^{-1}\gamma$ is a disjoint union of embedded circles and
a deformation retraction  of $P\ssm \gamma$  onto $\alpha$ lifts to a deformation retraction  of $C\ssm f^{-1}\gamma$  onto $f^{-1}\alpha$. 
So each connected component of $f^{-1}(P\ssm \gamma)$ has a connected component of $f^{-1}\alpha$ as deformation retract, and 
since  $f^{-1}\alpha$ is a  trivalent graph (with vertex set the preimage of $q_1$), this component  will have first Betti number $\ge 2$, and therefore negative Euler characteristic. 

Denote by $t_\gamma$  the simple braid generator in the mapping class group of the orbifold $P$ supported by  a regular neighborhood of $\gamma$ in $P\ssm \{q_1, q_2\}$. So a homeomorphism representing $t_\gamma$ interchanges $q_3$ and $q_4$.  Note that $t_\gamma$ takes an $f:C\to P$ as above to an 
$\Af_5$-covering  $f':C'\to P$ of the same \emph{type}, by which we mean that this comes with 
orientation-preserving  $\Af_5$-equivariant diffeomorphism $(C, f^{-1}\gamma)\cong  (C', f'{}^{-1}\gamma)$.

The following is the second main result of this section. It answers the above question.

\begin{theorem}[{\bf Genus $6$ surfaces with $\Af_5$ action}]
\label{theorem:moduli1}
Up to $P$-isomorphism, there are 18 $\Af_5$-coverings of $P$ that resolve the orbifold singularities; these coverings are necessarily of genus $6$. They come in three types and make up five orbits under the group $\la t_\gamma\ra$ generated by $t_\gamma$. More precisely, they are:
\medskip

\begin{description}
\item[Petersen configuration] The intersection graph of $(C,f^{-1}\gamma)$ is the Petersen graph; this is a single $\la t_\gamma\ra $-orbit which has two elements.
The $\Af_5$-stabilizer of a connected component of $C\ssm f^{-1}\gamma$ is  conjugate to $\Sf_3^{\ev}$ (or equivalently, is the $\Af_5$-centralizer of a transposition of $\Sf_5$).
\medskip

\item[$K_5$ configuration] The intersection graph of $(C,f^{-1}\gamma)$ is the complete graph on $5$ vertices; it has two $\la t_\gamma\ra $-orbits, each having three elements. The $\Af_5$-stabilizer of a connected component of $C\ssm f^{-1}\gamma$ is  conjugate to the subgroup $\Af_4\subset \Af_5$.
\medskip

\item[Dodecahedral configuration] The intersection graph of $(C,f^{-1}\gamma)$ has a single vertex with five loops; it has two $\la t_\gamma\ra$-orbits, each having five elements (\footnote{The name \emph{dodecahedral configuration} will become clear later.}).
\end{description}
\medskip

An outer automorphism of  $\Af_5$ leaves the orbit type invariant; it exchanges the two elements of the Petersen case and exchanges the two orbits in the last two cases.
\end{theorem}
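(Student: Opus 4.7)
My plan is to transport the enumeration into an algebraic one for surjections $\pi_1^{orb}(P) \to \Af_5$, carry it out with the Frobenius character formula, and then read off the three types and the $\la t_\gamma\ra$-orbit structure from the subgroup combinatorics.

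Using the presentation
\[
\pi_1^{orb}(P) = \la x_1, x_2, x_3, x_4 \mid x_1^3 = x_2^2 = x_3^2 = x_4^2 = x_1 x_2 x_3 x_4 = 1\ra,
\]
a smooth Galois $\Af_5$-cover of $P$ resolving the orbifold singularities is precisely a surjection $\phi : \pi_1^{orb}(P) \to \Af_5$ with $\phi(x_1)$ a $3$-cycle and $\phi(x_2), \phi(x_3), \phi(x_4)$ double transpositions, with $P$-isomorphism corresponding to $\Af_5$-conjugacy. Normalizing $\phi(x_1) = (132)$, the problem reduces to counting triples $(g_2, g_3, g_4)$ of double transpositions with $g_2 g_3 g_4 = (123)$, modulo the free conjugation action of the order-$3$ centralizer $\la (132)\ra$. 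Frobenius' formula applied to the $\Af_5$ character table gives exactly $54$ such triples, and since the only proper subgroups of $\Af_5$ containing both a $3$-cycle and a double transposition are copies of $\Af_4$---in which a product of three double transpositions necessarily lies in the Klein four-group, so is never a $3$-cycle---every triple is generating. This yields $54/3 = 18$ covers up to $P$-isomorphism.

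For the type classification I partition by the order $k$ of $h := g_3 g_4$. Since $g_3 = g_4$ would force $g_2 = (123)$, one has $k \in \{2, 3, 5\}$, with $H := \la g_3, g_4\ra$ equal to $V_4$, $\Sf_3$, or $D_5$ respectively. A second Frobenius computation (now for pairs of double transpositions with prescribed product) gives $2, 3, 5$ pairs per admissible $h$, splitting the $54$ triples as $6 + 18 + 30$ and correspondingly the $18$ covers as $2 + 6 + 10$. The geometric labels follow from the coset description: components of $C\ssm f^{-1}\gamma$ are indexed by $\Af_5/\Stab$ and circles by $\Af_5/H$. In the Petersen case $\Stab$ is the $\Af_5$-centralizer $\Sf_3^{\ev}$ of a transposition in $\Sf_5$, so the component set is $\Af_5$-equivariantly the set of $10$ transpositions of $\Sf_5$, with two components sharing a circle exactly when the corresponding transpositions are disjoint---the standard realization of the Petersen graph. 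The $K_5$ and dodecahedral identifications follow by parallel inspection ($\Stab = \Af_4$ acting on $5$ points, respectively $\Stab = \Af_5$ with circles corresponding to the six five-fold axes of the icosahedron).

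The half-twist $t_\gamma$ acts by the Hurwitz move $(g_3, g_4) \mapsto (g_3 g_4 g_3, g_3)$, fixing both $g_2$ and $h$; iteration gives $t^n(g_3, g_4) = (h^n g_3, h^{n-1} g_3)$, so the $t$-orbit on an ordered triple has length exactly $k = \ord(h)$. This orbit length survives the passage to covers because $\la (132)\ra \cap Z_{\Af_5}(g_2)$ is trivial for order reasons ($Z_{\Af_5}(g_2) \cong V_4$ has order coprime to $3$); the resulting $\la t_\gamma\ra$-orbits on covers then have the announced sizes $2, 3, 3, 5, 5$. Finally, the unique nontrivial outer automorphism of $\Af_5$ (conjugation in $\Sf_5$ by a transposition) preserves the $\Af_5$-classes of $3$-cycles and of double transpositions, hence preserves the orbit type, while a direct comparison at the level of triples shows it swaps the two Petersen covers and the two $\la t_\gamma\ra$-orbits within each of the $K_5$ and dodecahedral types. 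The main technical obstacle will be the Petersen graph identification: showing the intersection graph really is Petersen---and not merely some other $3$-regular graph on ten vertices---requires unwinding the $\Af_5$-equivariant bijections from components of $C \ssm f^{-1}\gamma$ to cosets $\Af_5/\Sf_3^{\ev}$ to transpositions of $\Sf_5$, and then verifying that the ``shared-circle'' incidence corresponds precisely to disjoint support.
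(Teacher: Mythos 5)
Your enumeration takes a genuinely different route from the paper. The paper (Lemmas~\ref{lemma:a5} and \ref{lemma:a5'}, assembled in Proposition~\ref{prop:a5}) proceeds by elementary combinatorics: it classifies the $\Af_5$-orbits on $\Af_5(3)\times\Af_5(2)$ by the subgroup generated (a free action with five orbits, of types $\Sf_3^{\ev}$, $\Af_4$, $\Af_4$, $\Af_5$, $\Af_5$), and separately counts ordered factorizations of a fixed $h\in\Af_5(r)$ into two involutions (exactly $r$ of them). You replace both counts with Frobenius' character-sum formula, first for triples of double transpositions with a prescribed $3$-cycle product, then for pairs with a prescribed product. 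The numbers agree ($54$ triples, $54/3 = 18$ covers, $6+18+30 = 54$, giving $2+6+10$), and the Frobenius route is a perfectly legitimate alternative, though somewhat heavier machinery than what the paper's lemmas require. From there your treatment of $t_\gamma$ matches the paper's in substance: the half-twist is $(g_3,g_4)\mapsto(g_3g_4g_3,g_3)$, iteration gives $t^n(g_3,g_4)=(h^ng_3,h^{n-1}g_3)$ with $h=g_3g_4$, and the orbit length $\ord(h)$ survives passage to $\Af_5$-conjugacy classes because $Z(g_1)\cap Z(g_2)$ is trivial ($|Z(g_1)|=3$ and $|Z(g_2)|=4$ are coprime).

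There is one gap worth flagging. Your step from ``$54$ solutions'' to ``$54$ generating solutions'' rests on the assertion that the only proper subgroups of $\Af_5$ containing both a $3$-cycle and a double transposition are copies of $\Af_4$. This is false: the order-six subgroups $\Sf_3^{\ev}$ (stabilizers of a $2$-element subset of $\{1,\dots,5\}$, i.e.\ the $\Af_5$-centralizers of transpositions of $\Sf_5$) also contain $3$-cycles and double transpositions, and they play a central role elsewhere in the very theorem you are proving. The conclusion you want is nonetheless true, because the same parity obstruction applies in $\Sf_3^{\ev}\cong\Sf_3\times\langle(de)\rangle$: a product of an odd number of its double transpositions is again a double transposition, never a $3$-cycle. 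You should add this case. Your coset description of the vertices and circles (vertices indexed by $\Af_5/\la g_1,g_2\ra$, circles by $\Af_5/\la g_3,g_4\ra$) is correct, and as you note the Petersen identification still needs the incidence relation to be pinned down; the paper is likewise terse at exactly this point, so supplying that verification would actually strengthen the exposition rather than merely catch up to it.
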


\begin{remark}   
The topological interpretation of the $t_\gamma$-action is as follows. 
If  $U_\gamma$ is a thin regular neighborhood $U_\gamma$ of $\gamma$, then  each connected component of $f^{-1}U_\gamma$ has the same degree $d$ over $U_\gamma$ and for the three cases listed $d$ equals
$2$, $3$ and $5$ respectively. The mapping class $t_\gamma^d$ takes  $C$ to itself, but induces a Dehn twist along each connected component of $f^{-1}\gamma$. In particular, its restriction to $C\ssm f^{-1}\gamma$ is isotopic to the identity.
\end{remark}

We prove at the same time a universal property. This will yield   a different (and perhaps more elementary) proof of Theorem 3.4 and Corollary 3.5 of \cite{DFL}.

For $P$ as above, the double cover of $P$ branched at the four orbifold points is a genus one Riemann surface, which we make an elliptic curve by taking the order 3 point 
as origin. Then the three other  points of ramification are the points of order $2$ of this elliptic curve. Thus the isomorphism type of $P$ determines and is 
determined by an element of $\PSL_2(\Z)\bs \Hf$, the moduli space of elliptic curves.  

Before proceeding to the proofs of the theorems above, we make a few remarks.

\begin{remark}\label{rem:noncongruence}
Recall that a subgroup of $\PSL_2(\Z)$ is called a {\em congruence subgroup} if  for some positive integer $m$ it is the preimage of a subgroup of $ \PSL_2(\Z/m)$ under the mod $m$ reduction $\PSL_2(\Z)\to \PSL_2(\Z/m)$. It was shown by H.~Larcher (Theorem C in \cite{larcher}) that
the set of cusp widths of such a group is closed under taking \emph{lcm} and \emph{gcd}. As this is evidently not the case 
for the group $\Gamma$ in Theorem \ref{theorem:moduli2}, we conclude that $\Gamma$  is \emph{not} a congruence subgroup.
 \end{remark}

\begin{remark}[{\bf Universal family versus Deligne-Mumford stack}]\label{rem:fivepunctured} 
We established in Theorem 3.4 of \cite{DFL} that $\Bc^\circ$ parametrizes the smooth, projective, genus $6$ curves endowed with a faithful $\Af_5$-action. 
It is given by the part of the base of the Wiman-Edge pencil over which we have smooth fibers; there are five singular fibers, each of which is 
Deligne-Mumford stable, and which have dual intersection graphs as described here. In other words,  our notation  is compatible with the notation 
employed in \cite{DFL}. In particular,  $\Bc$ can be identified with the base of the Wiman-Edge pencil. 

A comment regarding Theorem 3.4 of \cite{DFL}---if not a correction---is in order. 
The $\Sf_5$-action on the Wiman curve implies that as a $\Af_5$-curve it admits an automorphism that is not inner. So if we want to attribute to the 
family over $\Bc^\circ$  a universal property, as does Theorem 3.4, then we must work in the 
setting of Deligne-Mumford stacks. To be precise, while  it is true that every family of smooth  geometrically connected $\Af_5$-curves of genus $6$  fits in a cartesian square
with the Wiman-Edge pencil on the right, there are a few (rare) cases for which there exist more than one choice for the  top arrow.
A similar phenomenon occurs at  the Petersen curve. 
\end{remark}

\begin{remark}\label{rem:fundgroup}
Since $\Gamma$ is torsion free and contained in $\PSL_2(\Z)$, it is in fact free, and it can be identified with the fundamental group of $\Bc^\circ$, with the customary ambiguity: we need to choose a base point. Here a natural choice is the point $c_o\in \Bc$ defining the Wiman curve--and an isomorphism is then given up to inner automorphism. The base $\Bc^\circ$ is a $5$-punctured sphere and so $\Gamma$ is free on 4 generators.
\end{remark}

\begin{figure}
\centerline{\qquad\qquad\includegraphics[scale=0.275]{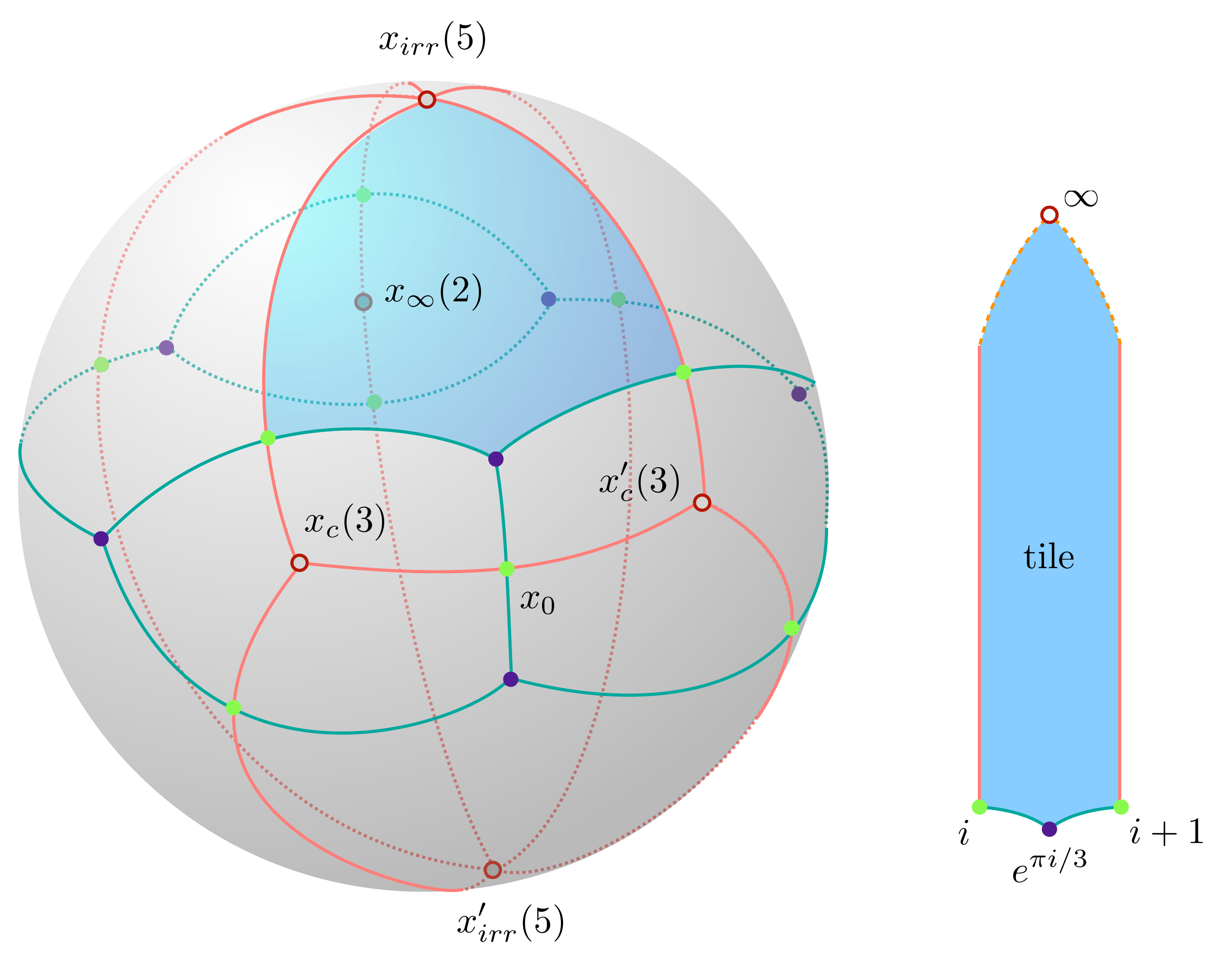}}
\caption{\footnotesize 
The base $\Bc$ of the Wiman-Edge pencil $\Cs$. It is a $2$-sphere with $5$ marked points (colored red in the figure) corresponding to the $5$ singular members of $\Cs$.  As explained in Theorem~\ref{theorem:moduli2}, the complement $\Bc^\circ$ of these $5$ points in ${\Bc}$, which represents the moduli space of smooth genus $6$ curves with an $\Af_5$ action, has a finite area hyperbolic metric with $5$ cusps, one for each deleted point. The cusp $x_\infty$ has width 2, the cusps $x_c$ and $x'_c$ have width $3$ and the 
cusps $x_\ir$ and $x'_\ir$ have width $5$. The point $x_o$ defines the Wiman curve and $\iota$ is the reflection in the axis through $x_o$ and $x_\infty$. }
\label{fig:baseWEpencil}
\end{figure}
In order to prove the theorems stated above, we need some preparation. We do part of this in greater generality than is needed here, as we believe that this makes the discussion more transparent, while it involves  no additional effort.

\subsection{Coverings of orbifolds of genus zero}\label{subsect:G}
We denote by $\pi_{0,n}$  the group with generators $a_1, \dots, a_n$ subject to the relation $a_1\dots a_n=1$. 
This  is of course the fundamental group of an $n$-punctured genus zero surface, where $a_i$ represents a simple loop around the $i$th puncture.  We begin by introducing notation for data that are 
useful for encoding a finite group action on a Riemann surface.   

\begin{definition}[The set {\boldmath$G(p)$}]
\label{definition:G}
Given a group $G$ and a  descending sequence of positive integers 
$\pbf=(p_1\ge  \dots \ge p_n)$  of length $n\ge 3$ with $p_n>1$, we denote by $\tilde G(\pbf)$ the  set of surjective group homomorphisms $\gbf: \pi_{0,n}\to G$ such that $\mathbf{g}(a_i)$ has order $p_i$ ($i=1, \dots , n$).
We can of course think of $\gbf$ as an $n$-tuple $(g_1, \dots, g_n)\in G^n$ with $g_1g_2\dots g_n=1$ and whose members generate $G$.
Note that $\Aut(G)$  acts on $\tilde G(\pbf)$ by postcomposition  on the left. In particular $G$ acts via $\In(G)$ on $\tilde G(\pbf)$ by simultaneous conjugation.  Define $G(\pbf)$ to be the quotient
\[
G(\pbf):=G\bs\tilde G(\pbf)
\]
so that the $\Aut(G)$-action  on $\tilde G(\pbf)$ induces an $\Out(G)$-action on $G(\pbf)$. 
\end{definition}

Denote by $\widetilde\Mod_{0, \pbf}$  the group of automorphisms of $\pi_{0,n}$ that permute the conjugacy classes of the $a_i$ in such a manner that if $a_i$  is mapped to the conjugacy  class of $a_j$, then $p_i=p_j$.  It acts in an obvious manner (by precomposition, so on the right) on  $\tilde G(\pbf)$, and this action evidently commutes with the $\Aut(G)$ action. Note that  $\widetilde\Mod_{0, \pbf}$ contains the group of inner automorphisms of $\pi_{0,n}$  as a normal subgroup.
We write $\Mod_{0, \pbf}$ for the quotient group. So $\Mod_{0, \pbf}$  is a group of outer automorphisms of $\pi_{0,n}$; we  may regard it as the mapping class group of an orbifold of type $(0; p_1, \dots , p_n)$.

The outer automorphisms of $\pi_{0,n}$ that preserve the conjugacy class of \emph{each} $a_i$ form a subgroup of $\Mod_{0, \pbf}$ that can be 
identified with the mapping class group  $\Mod_{0,n}$ of an $n$-punctured sphere that preserves the punctures.  It is a normal subgroup of $\Mod_{0, \pbf}$  with factor group the subgroup $\Sf_{\pbf}\subset \Sf_n$ of $\sigma\in \Sf_n$ which fix the map $i\mapsto p_i$, so that we have a short exact sequence
\[
1\to \Mod_{0,n}\to \Mod_{0, \pbf}\to \Sf_{\pbf}\to 1.
\]
These  groups also have an interpretation as (orbifold) fundamental group: $\Mod_{0,n}$ is the fundamental group of the fine moduli space $\calM_{0,n}$ of $n$-pointed genus zero curves (the space of injective maps $\{1, \dots, n\}\to \Pb^2$ modulo projective equivalence) and $\Mod_{0, \pbf}$ is the orbifold fundamental group of the space $\calM_{0, \pbf}$ parametrizing orbifolds of type $(0; p_1, \dots, p_n)$. The  latter is in general not a fine moduli space, but underlies a Deligne-Mumford stack.

For any $\gbf\in \tilde G(\pbf)$,  precomposition of $\gbf$ with the inner automorphism defined by $a\in \pi_{0,n}$ is the same 
as postcomposition with the inner automorphism defined by $\gbf(a)\in \pi_{0,n}$,  and so the right action of $\widetilde\Mod_{0, \pbf}$ on
$\tilde G(\pbf)$ decends to a right action of $\Mod_{0, \pbf}$ on $G(\pbf)$ that commutes with the action of  $\Out(G)$.

We use this setup to  encode the data of a $G$-cover of an orbifold of type $(0; p_1, \dots , p_n)$. We shall see that the set $G(\pbf)/\Mod (\mathbf {p})$ classifies the closed connected surfaces with a faithful $G$-action for which the associated orbifold is of type $(0; p_1, \dots , p_n)$. Points of $G(\pbf)/\Mod_{0,n}$ 
enumerate in addition the irregular orbits (the $i$th orbit has size $|G|/p_i$).

\subsubsection*{Constructing the moduli space} We now explain  how one constructs the entire moduli space of Riemann surfaces  endowed  with a faithful action of a finite group $G$ whose associated orbifold is of type $(0; p_1, \dots, p_n)$. We shall here assume that $n\ge 3$ and that each $p_i$ is $\ge 2$. Let $P$ be a copy of the Riemann sphere. Given an injection  
$\qbf: \{1, \dots, n\}\to P$, $i\mapsto q_i$,  put $U_\qbf:=P\ssm\{q_1,\dots, q_n\}$.  Choose a  basepoint $q_0\in U_\qbf$ and a set of standard generators of $\pi_1(U_\qbf, q_0)$ that identifies $\pi_1(U_\qbf, q_0)$ with $\pi_{0,n}$. So   $a_i$ is represented by a simple (positively oriented) loop around 
$q_i$  ($i=1, \dots, n$) such that  $a_1\dots a_n=1$ is the only relation (we traverse composite loops in the given order, so from left to right). 

Given $\mathbf{g}\in \tilde G(\pbf)$, then a homomorphism $\pi_1(U_\qbf, q_0)\to G$  is defined by $a_i\mapsto g_i$. This homomorphism is surjective and hence defines a connected 
$G$-covering $U_\qbf(\mathbf{g})\to U_{\qbf}$. By the theory of  coverings, $\gbf'\in \tilde G(\pbf)$ is in the same  $G$-orbit as $\gbf$ precisely if there exists a $G$-isomorphism $U_\qbf(\mathbf{g'})\cong U_\qbf(\mathbf{g})$ over $U_\qbf$.  This isomorphism need not be unique, because  a $G$-covering 
$U_\qbf(\mathbf{g})\to U_\qbf$ may have automorphisms over $U_\qbf$. But such an automorphism must be a deck transformation which commutes with all other deck transformations, in other words, must belong to the center of $G$. Therefore, \emph{we assume from now on that $G$ has trivial center.}  Note that this holds both for $\Af_5$ and for $\Sf_5$.

We apply the Riemann  extension theorem to extend  $U_\qbf(\mathbf{g})\to U_{\qbf}$ uniquely  
to a ramified $G$-covering  $P_\qbf(\mathbf{g})\to P$, so that $P_\qbf(\mathbf{g})$ is a nonsingular projective curve and with ramification order $p_i$ over  $q_i$. 
Because of the above assumption, a $G$-isomorphism $P_\qbf(\mathbf{g'})\cong P_\qbf(\mathbf{g})$ exists over $P$ if and only if $\gbf'$ and $\gbf$ lie in the same $G$-orbit. So if we regard
 $(P,\qbf)$ as an orbifold of type $(0; p_1, \dots, p_n)$, then  the smooth connected $G$-covers of $(P,\qbf)$ (so  those which resolve the orbifold singularities) form a $G(\pbf)$-torsor.

If we now allow $\qbf$ to vary, then the $P$-isomorphism types  of these covers define a covering of the configuration space $P^{(n)}$ of injective maps  
$\qbf: \{1, \dots, n\}\hookrightarrow P$ with  fiber  isomorphic to $G(\pbf)$. The action of $\Aut (P)$ on $P$  lifts to this covering, despite the fact that  
$\Aut (P)\cong\PSL_2(\C)$ is not simply-connected.  The reason is that traversing a  nontrivial loop in $\Aut (P)$ based at the unit acts on
$\pi_1(U_\qbf, q_0)$ as an inner automorphism and so will act trivially on the cover. When we divide out by this $\Aut (P)$-action (which is easily accomplished by fixing the first three components of $P^{(n)}$) we  obtain a diagram
\begin{equation}\label{eqn:universal}
\Cs_{\widehat{B} G (\pbf)}\to\widehat{B}G(\pbf)\to\calM_{0,n},
\end{equation}
in which  the first morphism has the interpretation of a fine moduli space of systems  
$(C,\phi;\Oc_1,\dots , \Oc_n )$, where $C$ is a  connected compact Riemann surface,  $\phi: G\hookrightarrow \Aut(C)$ an injective homomorphism such that $G\bs C$ has genus zero and $\Oc_1,\dots , \Oc_n$ is a faithful 
enumeration of the irregular orbits such that  $\Oc_i$  has order $|G|/p_i$. Here  we declare $(C,\phi;\Oc_1,\dots , \Oc_n )$ and 
$(C',\phi';\Oc'_1,\dots , \Oc'_n  )$ to be equivalent if there  exist a $G$-isomorphism of  $C$ onto $C'$ over $P$ that respects the indexing of the irregular orbits. 
Since $G$ is centerless,  this isomorphism is unique, so that the fibers of $\Cs_{\widehat{B} G (\pbf)}\to\widehat{B}G(\pbf)$ are smooth curves endowed
with a $G$-action. The second morphism is given by formation of the $G$-orbifold, where we use the numbering of the irregular orbits.

The  finite group $\Sf_{\pbf}$ acts on $P^{(n)}$ by permuting the factors. This  commutes with the $\Aut(P)$-action, but the product 
action need no longer be free, for some nontrivial element in $\Aut(P)$  could permute nontrivially the ramification points  $(q_1, \dots , q_n)$, while 
preserving  their weights. In other words, the residual action of the finite group $\Sf_{\pbf}$ on the diagram (\ref{eqn:universal}) may have fixed points in $ \calM_{0,n}$ and  
can act nontrivially on a fiber over such a point. So the  quotient by this action is a priori a Deligne-Mumford stack
\begin{equation}\label{eqn:universal2}
\Cs_{\Bb G (\pbf)}\to \Bb G(\pbf)\rightarrow \Mb_0(\pbf).
\end{equation}
Its modular interpretation is that of (\ref{eqn:universal}), except that  the irregular orbits are no longer numbered. Observe that a fiber of  $\Bb G(\pbf)\rightarrow \Mb_0(\pbf)$  over a non-orbifold point is  as a $\Out(G)\times \Mod(\pbf)$-set  identified with $G(\pbf)$, where the action $\Mod(\pbf)$  is as a group of covering transformations of $\Bb G(\pbf)\rightarrow \Mb_0(\pbf)$, 
so that the connected components of $\Bb G(\pbf)$ are indexed by the set $G(\pbf)/\Mod(\pbf)$. 

\begin{remark}\label{rmk:DM}
We have to resort to Deligne-Mumford stacks, because there might exist a $G$-curve admitting an automorphism which
nontrivially permutes  its irregular orbits. We shall see that this happens for the Wiman curve.
\end{remark}

\subsection{Proofs of the three theorems} 
In view of the previous discussion, Theorem~\ref{theorem:moduli0}  is an immediate consequence to the following lemma.
  
\begin{lemma}\label{lemma:s5}
The set $\Sf_5(6,4,2)$ (in the notation of \S\ref{subsect:G}) is a principal  $\Sf_5$-orbit. An orbit representative is
$((123)(45), (1245),(14)(23))$. 
 \end{lemma}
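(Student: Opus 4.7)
Unpacking the definition, a member of $\tilde\Sf_5(6,4,2)$ is a triple $(g_1,g_2,g_3)\in\Sf_5^3$ with $g_1g_2g_3=1$ of respective orders $6,4,2$ whose members generate $\Sf_5$, and $\Sf_5$ acts on this set by simultaneous conjugation. I want to show this action is simply transitive. Freeness is automatic, since the stabilizer of a generating triple equals $Z(\Sf_5)=\{e\}$; so only transitivity needs work.

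The first step is to pin down cycle types by a parity argument. In $\Sf_5$ the unique order-$6$ conjugacy class is the $(3,2)$-type, and order-$4$ elements are exactly the $4$-cycles; both are odd. Hence $g_3=(g_1g_2)^{-1}$ is an even involution, i.e.\ a double transposition. Since all $(3,2)$-type elements are $\Sf_5$-conjugate, after applying an inner automorphism I may assume $g_1=(123)(45)$, whose centralizer in $\Sf_5$ is $\langle g_1\rangle$, of order $6$. Combined with the freeness observation, transitivity is equivalent to producing exactly six admissible pairs $(g_2,g_3)$ compatible with this $g_1$.

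The computational core is then a direct enumeration over the $15$ double transpositions $g_3\in\Sf_5$: for each one, compute $g_3g_1$ and check whether its inverse $g_2$ is a $4$-cycle. I expect exactly six choices of $g_3$ to satisfy this (among them the listed $g_3=(14)(23)$, which yields $g_2=(1245)$). The condition $\langle g_1,g_2\rangle=\Sf_5$ is automatic: any proper subgroup of $\Sf_5$ containing a $(3,2)$-element is either contained in $\Af_5$ (excluded because $g_1$ is odd) or in a conjugate of $\Sf_3\times\Sf_2$ (which contains no element of order $4$), so neither possibility survives.

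The main obstacle is purely bookkeeping in the $15$-case enumeration; a conceptual shortcut via the Frobenius character formula would compute $|\tilde\Sf_5(6,4,2)|$ without the generating condition and then require a separate exclusion of non-generating triples, so the direct check seems cleanest. One concludes by verifying that the given triple $((123)(45),(1245),(14)(23))$ lies among the six, which amounts to confirming that the product is the identity and that the orders are as stated.
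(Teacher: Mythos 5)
Your plan is correct and follows essentially the same route as the paper: fix $g_1=(123)(45)$ using the single order-$6$ conjugacy class, observe $Z(g_1)=\langle g_1\rangle$ has order $6$, and reduce transitivity to a small enumeration over completions (the paper enumerates $Z(g_1)$-classes of $4$-cycles $g_2$ and finds exactly one with $g_1g_2$ of order $2$; your ``exactly six pairs'' criterion is the same statement phrased via the free $Z(g_1)$-action, and your enumeration variable is $g_3$ rather than $g_2$). One small point in your favor: you make the generation condition explicit via the maximal-subgroup argument, which the paper leaves tacit.
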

 \begin{proof}
Since the elements of $\Sf_5$ of order $6$ make up a single conjugacy class, we can assume that $g_1=(123)(45)$.  Suppose that $g_2,g_3\in\Sf_5$ is such that $(g_1,g_2,g_3)$ satisfy the definition of $\Sf_5(6,4,2)$.  Note that $g_2$ must be a $4$-cycle.  We must show that any two choices of $g_2$ are conjugate via an element of the centralizer $Z(g_1)$ of $g_1$ in $\Sf_5$.  Note that 
$Z(g_1)$ is generated by $g_1$.  The $Z(g_1)$-conjugacy classes of $4$-cycles are represented by $(1234)$, $(1324)$, $(1245)$ and $(2145)$. If $g_2$ is one of these elements, then we find that $g_1g_2$ is of order $5$, $3$, $2$ and $3$ respectively. So $(1245)$ is the unique representative with the required property.  We then find  
that $g_3=(14)(23)$. 
\end{proof}

We now turn to the proofs of Theorems~\ref{theorem:moduli1} and \ref{theorem:moduli2}. So here $n=4$ and $(p_1,p_2, p_3, p_4)=(3,2,2,2)$.   We begin with three combinatorial lemmas.

\begin{lemma}\label{lemma:a5}
Let $\Af_5(r)$ denote the set of elements of $\Af_5$ of order $r$. Consider the action of 
$\Af_5$ on $\Af_5(3)\times \Af_5(2)$ by simultaneous conjugation.  Then this action is free, and every orbit is represented by precisely one of the following pairs (we name each case after the conjugacy class of the subgroup of $\Af_5$ generated by the pair):

\begin{description}
\item[$\Sf_3^{\ev}$] $((123), (23)(45))$, so that any  pair $(g_1, g_2)$ in this orbit generates a subgroup  conjugate to $\Sf_3^{\ev}$ and  $g_1g_2$ has order $2$,
\smallskip

\item[$\Af_4$] either $((123), (12)(34))$ or  $((123), (12)(35))$, so that  any  pair $(g_1, g_2)$ in one of these  orbits generates a subgroup  conjugate to
$\Af_4$ and $g_1g_2$ has order $3$,
\smallskip

\item[$\Af_5$]  either $((123), (14)(25))$ or $((123), (15)(24))$, so that  any  pair $(g_1, g_2)$ in one of these  orbits generates a subgroup  conjugate to
$\Af_5$ and $g_1g_2$ has order $5$.
\end{description}
 \end{lemma}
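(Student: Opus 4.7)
The plan is to prove the lemma in three steps: (1) establish freeness of the action, (2) count the orbits, and (3) identify explicit representatives together with the subgroup they generate and the order of $g_1g_2$.

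For freeness, I would observe that an element $h\in\Af_5$ stabilizing $(g_1,g_2)$ must centralize both $g_1$ and $g_2$. Since $\Af_5$ has $20$ elements of order $3$ forming a single conjugacy class, the centralizer $Z_{\Af_5}(g_1)$ has order $3$ and equals $\la g_1\ra$. Thus $h\in\la g_1\ra$, and if $h\ne e$ then $g_1$ centralizes $g_2$, giving $\la g_1, g_2\ra\cong\Z/6$. But $\Af_5$ has no elements of order $6$ (its element orders are $1,2,3,5$), so $h=e$. Freeness is the key structural input: it immediately implies the orbit count via $|\Af_5(3)\times \Af_5(2)|/|\Af_5|=(20\cdot 15)/60=5$.

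To enumerate representatives, I would exploit the fact that all $3$-cycles lie in a single $\Af_5$-conjugacy class and normalize $g_1=(123)$. The orbits of $\Af_5$ on $\Af_5(3)\times \Af_5(2)$ then biject with the orbits of $Z_{\Af_5}(g_1)=\la(123)\ra$ on $\Af_5(2)$. Since by the same non-commutation argument this action is also free, all orbits on the $15$ double transpositions have size $3$, confirming there are $5$ of them. I would then run through the $(123)$-action on the $15$ double transpositions—mostly a bookkeeping exercise—to read off one representative from each orbit, verifying that $(23)(45)$, $(12)(34)$, $(12)(35)$, $(14)(25)$, $(15)(24)$ do indeed lie in pairwise distinct orbits.

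Finally, for each representative $(g_1,g_2)$ I would compute the product $g_1g_2$ directly, obtaining respectively $(12)(45)$ (order $2$), $(134)$ (order $3$), $(135)$ (order $3$), and two $5$-cycles $(14253)$ and $(15243)$. Identifying the subgroup $\la g_1,g_2\ra$ is then automatic: in the first case the group acts as $S_3$ on $\{1,2,3\}$ while swapping $\{4,5\}$, with the even-parity constraint pinning it down as $\Sf_3^{\ev}$; in the two middle cases the generators have supports spanning exactly four points and generate a copy of $\Af_4$; in the last two cases the presence of both a $3$-cycle and a $5$-cycle forces $\la g_1,g_2\ra=\Af_5$. The main obstacle is purely organizational—keeping the case analysis of the $\la(123)\ra$-orbits on $\Af_5(2)$ tidy—since no deep argument is required once freeness is in hand.
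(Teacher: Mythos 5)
Your proof is correct and complete, but it takes a genuinely different route from the one in the paper. The paper's argument is geometric: it pictures a $3$-cycle as an oriented triangle and a double transposition as a pair of disjoint edges, and then classifies the $\Sf_5$-orbits in $\Af_5(3)\times\Af_5(2)$ by the incidence pattern of the edge-pair against the triangle (meeting it in an edge, in an edge plus the opposite vertex, or in two vertices). This immediately gives three $\Sf_5$-orbits without any explicit normalization or counting, after which the paper checks which of them split into two $\Af_5$-orbits; freeness is left as one of the ``other assertions [that] are also straightforward.'' Your approach is more algebraic: you prove freeness directly from the centralizer computation $Z_{\Af_5}(g_1)=\langle g_1\rangle$ together with the nonexistence of order-$6$ elements in $\Af_5$, deduce the orbit count $300/60=5$ from freeness, and then reduce to enumerating the $\langle(123)\rangle$-orbits on the $15$ double transpositions. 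What this buys you is that the freeness claim---which is the part of the lemma actually invoked later, in the proof of Proposition~\ref{prop:a5}---is given an explicit proof rather than being waved through; what the paper's approach buys is a coordinate-free description of the three types that also makes manifest why the first $\Sf_5$-orbit stays a single $\Af_5$-orbit (its stabilizer in $\Sf_5$ contains odd elements, e.g.\ the transposition fixing the triangle) while the other two split. One minor point: you compute $g_1g_2$ with right-to-left composition (obtaining $(12)(45)$, $(134)$, etc.), whereas the paper explicitly adopts the left-to-right convention for products. The orders and the generated subgroups are unaffected since $g_1g_2$ and $g_2g_1$ are conjugate, but for internal consistency with the paper's conventions (which matter when verifying $g_1g_2g_3g_4=1$ in the later proofs) the products should be $(13)(45)$, $(243)$, $(253)$, and the corresponding $5$-cycles.
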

 \begin{proof}
It is helpful to picture a $k$-cycle  of $\Sf_r$ as an oriented $k$-polygon with vertex set a $k$-element subset of 
$\{1, \dots, r\}$ (for $k=2$ this amounts to an unoriented edge).    So an element $(g_1,g_2)\in \Af_5(3)\times \Af_5(2)$  is represented by  oriented  triangle and an unordered pair of  disjoint edges.  We then see that only three  types are possible:
\medskip

\begin{description}
\item[$\Sf_3^{\ev}$]  the  pair of  disjoint edges meets the triangle in an edge only (the other edge is then the unique edge disjoint with  the triangle),
\smallskip

\item[$\Af_4$] the  pair of  disjoint edges  meets the triangle in the union of an  edge  and  the opposite vertex,
\smallskip

\item[$\Af_5$]  the  pair of  disjoint edges meets the triangle in two distinct vertices.
\end{description}
\medskip

This graph-like classification (where we ignore the labeling of the vertices) is a geometric way of describing  the $\Sf_5$-orbits in $ \Af_5(3)\times \Af_5(2)$. So there are three of these.
It is straightforward to check that in the first case the $\Sf_5$-orbit is also a   $\Af_5$-orbit, and in the two other cases splits into two such orbits.
The other assertions are also straightforward.
\end{proof}

\begin{lemma}\label{lemma:a5'}
Fix $h\in \Af_5(r)$ and consider the set of pairs $(h_1,h_2)\in \Af_5(2)\times \Af_5(2)$ such that $h=h_1h_2$.  For $r=2,3,5$, this set consists of exactly $r$ items: for $r=2$, $h_1$ and $h_2$ must commute and the two pairs only differ by their order (they generate a Kleinian Vierergruppe), whereas for $r=3$ and $r=5$ this is a free $\la h\ra$-orbit (acting by simultaneous conjugation). 
\end{lemma}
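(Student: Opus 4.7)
The plan is to convert the two-variable counting problem into a one-variable one: given $h_1^2=1$, setting $h_2=h_1h$ makes the requirement $h_2^2=1$ equivalent to the dihedral relation $h_1hh_1=h^{-1}$, while the requirement $h_2\neq 1$ rules out $h_1=h^{-1}$. Thus the pairs in question correspond bijectively to involutions $h_1\in \Af_5$ inverting $h$ by conjugation, with the single exclusion $h_1=h^{-1}$ (relevant only when $h$ is itself an involution). The $\la h\ra$-action by simultaneous conjugation manifestly preserves this set, since $(hh_1h^{-1})(hh_2h^{-1})=hh_1h_2h^{-1}=h$. I then handle the three values of $r$ separately.

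For $r=2$ the dihedral relation collapses to $h_1\in Z_{\Af_5}(h)$. The $\Af_5$-conjugacy class of a double transposition has $15$ elements, so its centralizer has order $4$ and must be a Kleinian Vierergruppe; for $h=(12)(34)$ this is $V_4=\{1,(12)(34),(13)(24),(14)(23)\}$. Excluding $h_1=h$ leaves exactly two involutions, and their partners $h_2=h_1h$ are the remaining nontrivial elements, so the two pairs differ only by order. Commutativity of $h_1$ and $h_2$ follows automatically from $h=h^{-1}=(h_1h_2)^{-1}=h_2h_1$, and $h,h_1,h_2$ span $V_4$, as claimed.

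For $r\in\{3,5\}$ I would enumerate the inverting involutions explicitly. The identity $\sigma h\sigma^{-1}=(\sigma(a_1)\sigma(a_2)\cdots)$ forces any $\sigma$ inverting $h$ to preserve the support of $h$ setwise and to act on it as an order-reversal. For $h=(123)$, such $\sigma$ must act on $\{1,2,3\}$ as a transposition, and (being of cycle type $(2,2)$) must additionally swap $4$ and $5$, giving the three candidates $(ij)(45)$ with $\{i,j\}\subset\{1,2,3\}$. For $h=(12345)$, the inverting involutions are precisely the five reflections of a regular pentagon labelled $1,\dots,5$, each fixing one vertex and swapping the other four in pairs, hence a $(2,2)$-cycle lying in $\Af_5$. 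Freeness of the $\la h\ra$-action is then automatic: if $h^k$ stabilizes some $h_1$, then $h^k\in Z_{\Af_5}(h_1)$, a Vierergruppe of order $4$, and since $\gcd(r,4)=1$ for $r\in\{3,5\}$ we conclude $h^k=1$. The matching cardinalities ($r$ involutions and $|\la h\ra|=r$) then force a single free orbit. The only potential snag is making sure no inverting involution is missed outside the "obvious" symmetric candidates, but this is handled by the cycle-conjugation identity above, which constrains the support of $\sigma$ completely.
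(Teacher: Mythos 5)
The paper leaves this lemma as an exercise (``The proof of this lemma is left as an exercise''), so there is no proof in the text to compare against. Your argument is correct and complete: the reduction from pairs $(h_1,h_2)$ to single inverting involutions $h_1$ via $h_2=h_1h$ (with the exclusion $h_1\neq h^{-1}$, relevant only for $r=2$) is exactly the right move; the centralizer count $|Z_{\Af_5}(h)|=60/15=4$ settles the $r=2$ case; the cycle-conjugation identity $\sigma h\sigma^{-1}=(\sigma(a_1)\sigma(a_2)\cdots)$ forces $\sigma$ to stabilize the support of $h$ and reverse its cyclic order, correctly yielding the $r$ inverting involutions for $r\in\{3,5\}$; and the coprimality $\gcd(r,4)=1$ against $|Z_{\Af_5}(h_1)|=4$ gives freeness, which together with the cardinality match forces a single $\langle h\rangle$-orbit.
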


The proof of this lemma  is left as an  exercise.

\begin{proposition}\label{prop:a5}
The $(g_1, g_2, g_3, g_4)\in\Af_5(3,2,2,2)$  come in  three types, according to the order $r$ of $g_1g_2$: two elements with $r=2$, six elements with $r=3$ 
and ten elements with $r=5$. The stabilizer of $a_1a_2$ in  $\Mod (3,2,2,2)$ preserves  each type, 
acts  transitively on the set of elements with $r=2$, and has two orbits for $r=3, 5$.  

The nontrivial element of  $\Out(\Af_5)\cong \Sf_5/\Af_5$ acts on $\Af_5(3,2,2,2)$  by exchanging the last two items for $r=2$ and 
exchanging the  two $\Mod (3,2,2,2)$-orbits for $r=3, 5$. 

The action of $\Mod (3,2,2,2)$ on $\Af_5(3,2,2,2)$ is transitive. 
\end{proposition}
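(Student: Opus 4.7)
The plan is to prove the proposition in four steps: enumeration, stabilizer analysis, outer automorphism action, and transitivity of the full mapping class group.

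First, I would enumerate $\Af_5(3,2,2,2)$ by stratifying according to $r := \ord(g_1 g_2)$. Lemma~\ref{lemma:a5} identifies five $\Af_5$-orbits on pairs $(g_1, g_2) \in \Af_5(3) \times \Af_5(2)$: one of type $\Sf_3^{\ev}$ with $r = 2$, two of type $\Af_4$ with $r = 3$, and two of type $\Af_5$ with $r = 5$. For each such pair, Lemma~\ref{lemma:a5'} provides exactly $r$ extensions to pairs $(g_3, g_4)$ of involutions with $g_3 g_4 = (g_1 g_2)^{-1}$. To check that the resulting quadruple generates $\Af_5$, I would use the maximality of $\Sf_3^{\ev}$ and $\Af_4$ in $\Af_5$: in the $\Sf_3^{\ev}$ case the Klein four-group containing $h = g_1 g_2$ meets $\Sf_3^{\ev}$ only in $\{1, h\}$, so $g_3, g_4$ lie outside; in the $\Af_4$ case, having both $g_3, g_4 \in \Af_4$ would confine them to its Klein subgroup, forcing $g_3 g_4$ to be an involution and contradicting $r = 3$; the $\Af_5$ case is automatic. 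Freeness of the $\Af_5$-action on pairs (Lemma~\ref{lemma:a5}) then gives $1\cdot 2 + 2\cdot 3 + 2\cdot 5 = 18$, with the decomposition $2 + 6 + 10$ by $r$.

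Second, I would analyze the stabilizer of $a_1 a_2$ in $\Mod(3,2,2,2)$ together with the $t_\gamma$-action. The Hurwitz formula gives $t_\gamma(g_1, g_2, g_3, g_4) = (g_1, g_2, g_3 g_4 g_3^{-1}, g_3)$; writing $h = g_1 g_2$ and using $g_3^{-1} = g_3$ with $g_3 g_4 = h^{-1}$, a short calculation yields $t_\gamma \colon (g_3, g_4) \mapsto (h^{-1} g_3, g_3)$ and, iteratively, $t_\gamma^k(g_3, g_4) = (h^{-k} g_3, h^{-(k-1)} g_3)$. The identity $g_3 h g_3 = h^{-1}$ (immediate from $g_3 g_4 = h^{-1}$ and both factors being involutions) shows $t_\gamma^r = \mathrm{id}$, so $\langle t_\gamma \rangle$ acts transitively on the $r$-element set of pairs of Lemma~\ref{lemma:a5'}. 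Geometrically, $a_1 a_2$ is represented by a simple closed curve $c$ separating the weight pattern $(3,2)$ from $(2,2)$; because these multisets differ, no element of $\Mod(3,2,2,2)$ interchanges the two disks, and a standard analysis of mapping class groups of twice-punctured disks identifies the cut-system stabilizer of $c$ with $\langle t_\gamma \rangle$. Thus the stabilizer of $a_1 a_2$ gives one orbit of size $2$ for $r = 2$, and two orbits (one per $(g_1, g_2)$-orbit, of size $r$) for $r = 3, 5$.

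Third, for the $\Out(\Af_5)$-action, Lemma~\ref{lemma:a5} directly yields that the nontrivial outer automorphism exchanges the two $\Af_4$-orbits (hence the two $r = 3$ stabilizer-orbits) and the two $\Af_5$-orbits (hence the two $r = 5$ stabilizer-orbits). For $r = 2$ both elements lie over the single $\Sf_3^{\ev}$-orbit, so the outer action is internal; conjugating the chosen representative $((123), (23)(45), (14)(25), (15)(24))$ by an odd permutation such as $(12)\in \Sf_5$ and then adjusting by an element of $\Af_5$ to restore $(g_1, g_2)$ to standard form produces the tuple with $g_3$ and $g_4$ swapped.

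Finally, for transitivity of $\Mod(3,2,2,2)$ on the 18-element set, I would exhibit explicit Hurwitz moves outside the stabilizer of $a_1 a_2$ that bridge the five stabilizer-orbits. A direct computation shows, for instance, that $\sigma_2$ (the Hurwitz move interchanging the weight-2 points $q_2, q_3$) sends the $r = 2$ representative above to a tuple whose new $g_1 g_2'$ is a $5$-cycle, landing it in an $r = 5$ orbit; a short further chain of such moves reaches all of $\Af_5(3,2,2,2)$. I expect the main obstacle to be the stabilizer analysis in Step~2 — correctly identifying the cut-system stabilizer as $\langle t_\gamma \rangle$ and thereby ruling out hidden mapping classes that could glue orbits within a fixed $r$-stratum; once this is in hand, the remaining parts reduce to Hurwitz bookkeeping together with Lemma~\ref{lemma:a5}.
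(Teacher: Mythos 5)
Your proposal is correct and follows the paper's proof almost verbatim: enumerate via Lemmas~\ref{lemma:a5} and~\ref{lemma:a5'}, analyze the stabilizer of $a_1a_2$ through $t_\gamma$ and its powers (the paper's two explicit stabilizer elements are $t_\gamma$ and, up to an inner automorphism and a sign, $t_\gamma^{2}$), read off the $\Out(\Af_5)$-action from Lemma~\ref{lemma:a5}, and finish transitivity with explicit Hurwitz moves. Two of your refinements, namely checking that the quadruples actually generate $\Af_5$ via maximality of $\Sf_3^{\ev}$ and $\Af_4$, and pinning down the stabilizer of $a_1a_2$ as exactly $\langle t_\gamma\rangle$, are details the paper leaves implicit; for the last assertion, the cleanest way to complete your ``short further chain'' is, as the paper does, to move a single $r=5$ tuple to tuples of type $r=3$ and $r=2$ and then invoke $\Out(\Af_5)$-equivariance of the $\Mod(3,2,2,2)$-action to sweep in the two remaining stabilizer-orbits.
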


\begin{proof}
For the first assertion we must count the $\Af_5$-orbits in $\tilde\Af_5(3,2,2,2)$.  Lemma \ref{lemma:a5} allows us  represent each orbit 
by a  $\gbf=(g_1,g_2,g_3, g_4)$ with $(g_1, g_2)$ as in that lemma. This leads us to the three types, according to the order  $r$ of $g_1g_2$: 
for $r=2$ we have one case and for $r=3,5$ we have two cases to consider. Lemma \ref{lemma:a5} also asserts that simultaneous $\Af_5$-conjugation
acts freely on $\Af_5(3)\times \Af_5(2)$ and so for a given $(g_1, g_2)\in \Af_5(3)\times \Af_5(2)$,  it remains to count the number of  $(g_3,g_4)\in\Af_5(2)\times \Af_5(2)$ for which  $g_3g_4=(g_1g_2)^{-1}$. This is what
Lemma \ref{lemma:a5'} does for us,  and thus the first assertion  follows.  Note that a transposition in $\Sf_5$ (which represents the nontrivial element of 
$\Out(\Af_5$) 
exchanges the two pairs $(g_3, g_4)$ that we get for $r=2$ and exchanges the two orbits that we get for $r=3, 5$. 

The stabilizer of $a_1a_2$ in $\Mod (3,2,2,2)$ will of course preserve the type. One such element is given by 
\[
(a_1,a_2, a_3, a_4)\mapsto (a_1,a_2, (a_1a_2)a_3(a_1a_2)^{-1}, (a_1a_2)a_4(a_1a_2)^{-1}).
\] 
This has on $\Af_5(3,2,2,2)$ of course a similar effect with 
$a_i$ replaced by $g_i$. Lemma \ref{lemma:a5'} shows that for $r=3$ and $r=5$ we get all the elements in  $\tilde\Af_5(3,2,2,2)$  with $(g_1, g_2)$ prescribed  and of order $3$ or $5$. When $r=2$ we use the element of $\Mod (3,2,2,2)$ given by 
$(a_1,a_2, a_3, a_4)\mapsto (a_1,a_2, a_3a_4 a_3^{-1}, a_3)$, which has the effect on $(g_1,g_2,g_3, g_4)$ of exchanging $g_3$ and $g_4$.
This proves the second assertion.

The last assertion will follow if we prove that $\Mod (3,2,2,2)$ acts transitively on the types. We start with  $\gbf=((123), (14)(25), (12)(34), (15)(24))$, for which  the product of the first two items has order $5$. The element of $\Mod (3,2,2,2)$ defined by  
\begin{align*}
(a_1,a_2, a_3, a_4)&\mapsto (a_1, a_3, a_3^{-1}a_2 a_3,a_4) \text{ and its inverse }\\ (a_1,a_2, a_3, a_4)&\mapsto (a_1, a_2a_3a_2^{-1}, a_2,a_4)
\end{align*}
takes  $\gbf$ to  $((123), (12)(34), \dots)$ resp.\ $((123), (13)(45), \dots)$ for which the product of the first two items has order $3$ resp.\ $2$.
\end{proof}

\begin{proof}[Proof of Theorem~\ref{theorem:moduli1}]
The number and the labeling  of  $18$ cases follow from Proposition \ref{prop:a5}. It remains to show that  the intersection graphs are as asserted.
We do this with the help of Lemma \ref{lemma:a5}: it tells us that the vertices are in bijection the $\Af_5$-left cosets of $\la g_1, g_2\ra\subset \Af_5$
and the edges are in bijection with the $\Af_5$-left cosets of the cyclic subgroup  $\la g_1g_2\ra$, the incidence relation being given by inclusion.
This is then straightforward.
\end{proof}

\begin{proof}[Proof of Theorem~\ref{theorem:moduli2}]
Let us first observe that local charts of the moduli space $\Bc^\circ$ of compact, connected Riemann surfaces of genus $6$ with faithful $\Af_5$-action are obtained by a total order on the four orbifold points and taking their cross ratio. This implies
that the map $\Bc^\circ\to \PSL_2(\Z)\bs \Hf$ resolves the orbifold singularities of $\PSL_2(\Z)\bs \Hf$. In other words, this identifies 
$\Bc^\circ$ with a quotient $\Gamma\bs \Hf$, where $\Gamma\subset \PSL_2(\Z)$ is a torsion-free subgroup. The index  of $\Gamma$ in $\PSL_2(\Z)$
will of course be the degree of  $\Bc^\circ\to \PSL_2(\Z)\bs \Hf$, i.e., $18$. It follows from Proposition \ref{prop:a5} that $\Gamma$ has five cusps with width $2$, $3$, $3$, $5$ and $5$. The Riemann-Hurwitz formula  shows that if we fill in each of the five cusps, the resulting Riemann surface $\PSL_2(\Z)\bs \widehat{\Hf}$ has genus zero. 

The degenerations in question are obtained by shrinking $\gamma$  (for this is how we tend to the  cusp of $\PSL_2(\Z)\bs \Hf$). The extension of 
$\Cs_{\Bc^\circ}\to{\Bc^\circ} $ to   $\Cs_{\Bc}\to \Bc$ is formally taken care of by the general theory of Hurwitz schemes \cite{BR}, 
but as the present case is a relatively simple instance of this, we briefly indicate how this is done. By shrinking $\gamma$ we make of course also the connected components of $f^{-1}\gamma$ shrink and each such a `vanishing component' creates a node. To be precise, a  
local model at such a point is the double cover $w^2=z^2-t$ (with ramification points the two roots of $z^2-t$) and when $t$ moves in the unit disk, then over $t=0$ the cover acquires an ordinary node.  In global terms, an $\Af_5$-cover of $(P; q_1,q_2, q_3, q_4)$ becomes a stable curve when $q_3$ and $q_4$ coalesce and its dual intersection graph is then as described above.  That these graphs are as asserted then follows from Theorem~\ref{theorem:moduli1}. 

Applying an outer automorphism of $\Af_5$ must define an involution $\iota$ in $\Gamma\subset \PSL_2(\Z)$. It will have the point of $\Bc^\circ$ defining the Wiman 
curve as fixed point  for which the orbifold is representable $(\Pb^1; \infty, -1,0,1)$. This involution is therefore  representable by a M\"obius transformation 
in the $\PSL_2(\Z)$-conjugacy class of  $\left( \begin{smallmatrix}0 &1 \\ -1 & 0\end{smallmatrix}\right)$.  It must 
normalize  $\Gamma$ and so it generates with $\Gamma$ a subgroup   $\tilde\Gamma\subset\PSL_2(\Z)$ which contains $\Gamma$ as a subgroup of 
index $2$. The involution $\iota$ fixes the cusp of width $2$, interchanges the two cusps of width $3$, and interchanges the two cusps of width $5$.
\end{proof}

\begin{remark}   
The theorems in this section do not include the assertion that Figure \ref{fig:baseWEpencil} is the correct representation of $\Gamma\bs\widehat\Hf$. We know however that the latter  has the structure of a polygonal complex with the solid  $n$-gons  in bijective correspondence with the cusps of width $n$
in such a manner that in each vertex exactly three of these meet.  In our case we have two pentagons, two triangles and  one `bigon'  and then the reader will easily find that this figure is essentially the only way these can fit together to yield a closed surface (necessarily of genus zero). 
\end{remark}

\section{Geometric models of the Wiman-Edge pencil}\label{sect:combmodel}

In this section we first give a concrete geometric picture of each type of  degeneration of the Wiman-Edge pencil.
This is followed by three geometric descriptions of the Wiman-Edge pencil: the first one is directly based on the main results of Section \ref{sect:hypgeom} and the other two come from putting a natural piecewise flat resp.\  hyperbolic structure  on the fibers.

\subsection{Geometric models for the three degeneration types}
Recall that we have essentially three types of nodal degenerations for genus 6 curves with $\Af_5$-symmetry, distinguished  by their dual intersection graphs,
or what amounts to the same, their configurations of vanishing cycles: the Petersen curve, the $K_5$-curve and the dodecahedral curve. For each of these cases we give a geometric model and use polar  coordinates (of Fenchel-Nielsen type)  to describe the degeneration.

For the Petersen and the $K_5$-curve this is accomplished  by  the boundary of a 
regular neighborhood of an embedding  the graph in real $3$-space.
Over each midpoint of an edge we have a (vanishing) circle and these circles decompose the surface into ten $3$-holed spheres
resp.\ five $4$-holed spheres, see Figure (\ref{figure:degenerations}).  The $\Af_5$-stabilizer of such a holed sphere $S$ acts transitively on its boundary components and is a copy of $\Sf_3$ resp.\ $\Af_4$. 

Thus a $\Af_5$-invariant hyperbolic structure on the surface such that each vanishing circle becomes a geodesic, gives on $S$ a hyperbolic metric invariant for which $\partial S$ is geodesic and invariant under its stabilizer group.  All boundary components of $S$ thus have the same length $r$, and it is not hard to check that $r$ is a complete invariant of the isometry type of $S$. Conversely, if we are given $r>0$, then a number of copies of a hyperbolic holed sphere with symmetry and geodesic boundary length $r$  can be assembled to produce a closed hyperbolic surface with $\Af_5$-symmetry, although this involves an additional (angular) parameter $\theta$ which prescribes how boundary components of spheres are identified. The degeneration in question is obtained by letting $r$ tend to zero.

For the irreducible degeneration we proceed as follows.
Let  $D$ be a regular dodecahedron and choose an isomorphism of its isometry group  with $\Af_5$. 
Then the  group $\Af_5$ acts transitively on the set of $12$ faces of $D$, each of which is a regular pentagon. 
From each pentagonal face of $D$ we remove an open disk of a small radius (the same for every face)  centered at its barycenter so  that the resulting surface $D'$ is a $\Af_5$-invariant 12-holed sphere. Next we identify opposite boundary components of $D'$ in a $\Af_5$-equivariant manner, for example by means of the antipodal map. Since there are 6 such pairs,  the resulting  surface $\Sigma$ has genus $6$ and comes with a faithful action of $\Af_5$ by orientation preserving homeomorphisms.
Notice that the image of each face of $D'$ makes up a  cylinder in $\Sigma$.  These $12$ cylinders pave $\Sigma$ in particular way: one boundary component of a cylinder meets 5 others (let us call this the \emph{pentagonal component}) and the other  boundary component  (the \emph{smooth  component}) meets it opposite copy.

The surface $\Sigma$  also inherits from $D$ a flat metric which has the 20 vertices of $D$ as its cone points, each with angular defect $\pi/5$. 
The underlying conformal structure extends across such points and makes it a Riemann surface with $\Af_5$-action. The interior of each cylinder will be even a cylinder in the conformal sense, but its closure has only symmetry under a cyclic group of order 5: along  the pentagonal  component  it has now at each vertex of the pentagon an  interior angle of size $2\pi/3$. A fundamental domain  for this symmetry group is conformally equivalent  to a hyperbolic 4-gon with angles
$\pi/2, \pi/2, \pi/3, \pi/3$. Its isometry type is determined by the length $\ell$ of its first edge (between the straight angles). Note that it has an obvious axis of symmetry. Five copies of this $4$-gon make up a cylinder $Z(r)$ of the desired type; the geodesic boundary component has length $r=5\ell$.
We have no freedom in the way these 12 copies of $Z(r)$ are  glued onto one another along their  pentagonal components, so that $r$ is a complete invariant of the 12-holed  sphere. But we have an additional angular parameter $\theta$ when identifying opposite boundary components. The degeneration is defined by letting $r$ tend to zero.

In all three cases, the pair $(r,\theta)$ is a pair of Fenchel-Nielsen coordinates.
 
\begin{figure}
\centerline{\qquad\qquad\includegraphics[scale=0.5]{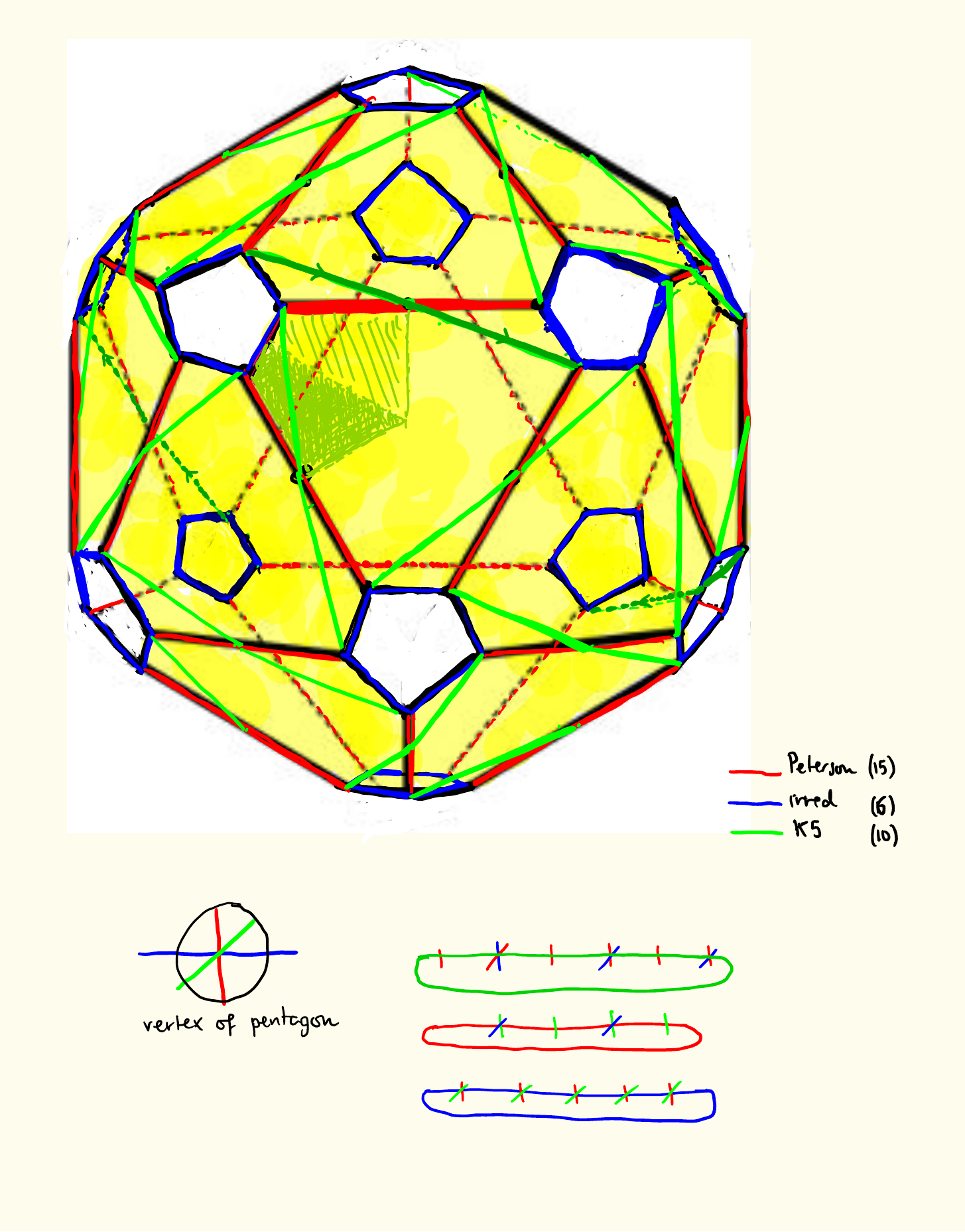}}
\caption{\small The three types of vanishing cycles depicted on an icosahedral surface of genus 6 (opposite pentagons must be  identified).}\label{figure:vc}
\end{figure}

\subsection{Geometric interpretation of the tesselation of $\Bc$}
Let $P$ be a complex orbifold of type $(0;3,2,2,2)$. We observed that if we choose an embedded segment $\gamma$ that connects two orbifold points of order $2$, then for every smooth $\Af_5$-cover $f: C\to P$, $f^{-1}\gamma$ is a closed submanifold of dimension one which decomposes $C$ into genus zero surfaces with boundary. If  $\gamma'$ is another  such embedded segment 
that meets $\gamma$ in just one orbifold point, but has a different tangent direction half-line at that point, then the two one-dimensional submanifolds 
$f^{-1}\gamma$  and $f^{-1}\gamma'$ will meet transversally. We show how to do this in a prescribed and consistent manner so that we end up with a combinatorial model of sorts  of the Wiman-Edge pencil.

Denote by $q_\infty\in P$ the order $3$ orbifold point.  We regard  $P\ssm\{q_\infty\}$ is an affine complex line. The  set $Q$ of three orbifold points of $P$ of order $2$ then make up a (possibly degenerate) triangle in this affine line.

Observe that the group $\PSL_2(\Z)$ has in $\Hf$ the fundamental domain the hyperbolic quadrangle (with one improper point at infinity) $F\subset\Hf$ defined by  
\[
    F=\{z: 0\le \Re (z)\le 1, |z|\ge 1 \ \text{and}\ |z-1|\ge 1\}.
\]
This gives rise to a tesselation of $\Hf$ with two $\PSL_2(\Z)$-orbits of vertices, namely of $\rho:=\exp(\pi \sqrt{-1}/3)$ and of 
$\sqrt{-1}$; and  two $\PSL_2(\Z)$-orbits of edges, namely of the (bounded) arc $E_b$ centered at $0$ connecting $\rho$ and $\sqrt{-1}$, and the unbounded half line $E_u$ on the imaginary axis with imaginary part $> 1$. Observe that  $F$ is bounded by $E_u, E_b$ and the translates 
$E'_b:= \left( \begin{smallmatrix}1 &-1 \\ 1 & 0\end{smallmatrix}\right)E_b$ and 
$E'_u:= \left( \begin{smallmatrix}1 &1 \\ 0 & 1\end{smallmatrix}\right)E_u$.  
We also note that  the  $\PSL_2(\Z)$-orbit of $\overline{E}_b$ is  the familiar trivalent graph that is a $\PSL_2(\Z)$-equivariant deformation retract of 
$\Hf$. So its image in $\Gamma\bs \Hf$  (which we shall denote by $K$) will be  a deformation retract of $\Gamma\bs \Hf$.

When we interpret $\PSL_2(\Z)\bs \Hf$ as the coarse moduli space of elliptic curves, the points of 
$E_b$ and $E_u$ represent elliptic curves that admit a {\em real structure}; that is, elliptic curves that have an antiholomorphic involution.  Such an antiholomorphic involution is here given as an antiholomorphic involution of $P$. It is then determined by its fixed-point set in $P$. This is a real form of $P$ (a real projective line). In our 
case this fixed-point set must pass through $q_\infty$, and so is given in $P\ssm\{q_\infty\}$ as a straight line. To see this, let us enumerate 
the members $\{q_1,q_2, q_3\}$ of  $Q$   in such a manner that  $[q_2, q_3]$ 
is a side of minimal length and $(q_1,q_2, q_3)$ defines the complex orientation of $P\ssm\{q_\infty\}$ when the triangle is nondegenerate. This labeling is almost always unique; the situations for which this is not the case correspond to edges or vertices of $F$, namely: 
$E_u$ parameterizes the situations for which $q_3$ lies on the segment $[q_1, q_2]$;  $E'_u$  for when $q_2$ lies on the segment $[q_1, q_3]$;  
$E_b$ for when $q_3$ is equidistant to $q_1$  and $q_2$;  $E'_b$ for when $q_2$ is equidistant to $q_1$ and $q_3$, and (hence)
$\rho$ corresponds to $q_1, q_2,q_3$  making up an equilateral triangle, $i$ to the case when $q_3=\frac{1}{2}q_1+\frac{1}{2}q_2$,
and $i+1$ the situation for which $q_2=\frac{1}{2}q_1+\frac{1}{2}q_3$. In these cases the line defining the  antiholomorphic symmetry is obvious: it is either a line spanned by two points of $Q$ or a line of points equidistant to two points of $Q$.

\subsection{A model based on affine geometry}\label{subsect:combmodel1}
The three sides of $Q$ have a well-defined length ratio. Denote by $\gamma_P$ the union of the sides of this triangle of minimal length.  So either $\gamma_P$ consists of a single edge, or consists of two edges of equal length making an angle $>\pi/3$, or consists of the three sides of an equilateral triangle. So over $E_b$, $\gamma (P)$ adds to $[q_2, q_3]$  the side $[q_1, q_3]$,  over $E'_b$ the side $[q_1, q_3]$ and hence  over $\rho$ we have  all three sides.  The map that assigns to $P$ the isomorphism type of  $\gamma_P$ defines on the moduli space of such orbifolds a `spine' which, as we will now explain,  is familiar via the identification of this moduli space with $\PSL_2(\Z)\bs\Hf$.

If $f: C\to P$ is an $\Af_5$-cover and $\gamma_P$ consists of one edge, then the preimage $f^{-1}\gamma_P$  is as described by Theorem 
\ref{theorem:moduli1}: this is a codimension one submanifold that decomposes $C$ into genus zero surfaces-with-boundary. But if 
$\gamma_P$ consists of two edges, then a deformation retraction $P\ssm\gamma_P$ onto $q_\infty$ lifts to $C\ssm f^{-1}\gamma_P$, and this shows that every connected component of $C\ssm f^{-1}\gamma_P$ is a topological disk. If $\gamma_P$ consists of three sides, then a similar argument shows that the same is true for a connected components of $C\ssm f^{-1}\gamma_P$.

\subsection{A model based on piecewise flat structures}\label{subsect:JSmodel}
The theory of Jenkins-Strebel differentials provides another combinatorial model of the Wiman-Edge pencil. This amounts to putting on the complement of the size $20$ orbit of every fiber a complete flat structure with a finite number of singularities.  

Let $P$ be an orbifold $P$ of type $(0; 3,2,2,2)$ in the holomorphic category. Denote  the orbifold point of order $3$  by $q_\infty$ and 
regard $P^\circ:=P\ssm\{ q_\infty\}$ as an affine complex line. We denote the 3-element set of order 2 orbifold points by $Q$. For any $a\in P^\circ$  there exists a unique meromorphic quadratic differential  $\eta_{P,a}$ on $P$ characterized by the property that  it has divisor $-(2q_\infty)-Q+a_P$  and with double residue at $q_\infty$ equal to $-1$.
So if we choose an affine coordinate $z$ for $P^\circ$, then
\[
\eta_{P,a}= -\frac{z-z(a)}{\prod_{q\in Q}(z-z(q))}dz^2. 
\]
At a point where $\eta_{P,a}$ has neither a zero nor a pole, there exists a local coordinate $w$ such that $\eta_{P,a}=dw^2$. So $|\eta_{P,a}|$ defines a flat metric at such a point. If $\eta_{P,a}$ has  order $k\ge -1$ at $z$, then this metric extends there, and makes the space  there locally a Euclidean cone 
with total angle $(k+2)\pi$. For $k=-2$, the metric does not extend and a punctured neighborhood will be metrically an infinite cylinder.
But $\eta_{P,a}$ gives more than a metric. If we regard it as a meromorphic function on the tangent bundle of $P$, then the tangent vectors on which  $\eta_{P,a}$ takes a real nonnegative value define a foliation on $P$ with singularities. 

In order that $\eta_{P,a}$ be a Jenkins-Strebel differential, all the leaves of this foliation save for a finite number must be compact. The theory asserts that this happens for exactly one choice of $a$. We  then write $a_P$ for $a$ and $\eta_P$ for $\eta_{P,a}$.  The compact leaves then encircle the point at infinity and have length $2\pi$ (when the length is measured via $\eta_P$),  and the noncompact leaves make up a tree $D_P $ connecting the three orbifold points of order $2$ with total length $\pi$. So the metric $|\eta_P|$ makes $P^\circ\ssm D_P$ isometric to the product of the unit circle and an open Euclidean half line.

When $a_P\notin Q$, the tree $D_P$ has three edges  connecting  $a_P$ with a point of $Q$, otherwise $D_P$ degenerates  in an obvious way into a graph with two edges. The form $\eta_P$ endows $P^\circ$ with a flat Euclidean structure which has singularities in $Q$ (with angular excess $-\pi$ for any $q\in Q$ and  $+\pi$ at $a$, albeit that the singularities at $q\in Q$ and $a_P$ cancel each other out when $a_P=q$.  This gives $P$ a {\em ribbon structure} with $D_P$ the associated ``ribbon tree''; see for example \cite{looijenga:JS}.  
The ribbon structure manifests itself only when $a_P\notin Q$ and then amounts to a cyclic ordering of $Q$.

Since the metrized ribbon tree $D_P$ is a complete invariant of  $P$ (see for example \cite{looijenga:JS}), the moduli space of complex orbifolds of type  $(0; 3,2,2,2)$ is thus identified with the space of triples of nonnegative real numbers with sum $\pi$ of which at most one is zero  and which are cyclically ordered when all are nonzero. By dividing these lengths by $\pi$, we can interpret such triples as the barycentric coordinates of a $2$-simplex; we thus have this moduli space identified with the standard $2$-simplex modulo its cyclic group of automorphisms (of order $3$) minus the image of the point
that represents the vertex set of the simplex. This space is homeomorphic to a sphere minus a point, the missing point corresponding to the case when (exactly) two points of $Q$ coalesce to a single point $q_0$; this is evidently a degenerate case. The associated Jenkins-Strebel differential  is then 
\begin{equation}\tag{degen}
\label{eqn:simple}
\eta_P=-dz^2/(z-z(q_0))(z-z(q_1)),
\end{equation}
where $q_1$ denotes the remaining point (we carry $P$ along in the notation, but $P$ should now be thought of as a degenerate orbifold) and the associated graph $D_P$ will be an arc connecting $q_0$ with $q_1$.

If $f:C\to P$ is a smooth $\Af_5$-cover (so that $f^{-1}q_\infty$ is a size $20$-orbit), then $f^*\eta_P$ defines on $f^{-1} P^\circ=C\ssm f^{-1}q_\infty$ a complete $\Af_5$-invariant flat Euclidean structure with singularities. The singular  locus of this metric is $f^{-1}a_P$, for the angular excess of 
the preimage of an order $2$ orbifold point  $\not=a_P$  will vanish because of the branching. When $a_P\notin Q$, the angular excess at 
any point of  $f^{-1}a_P$ is $+\pi$, otherwise it will be $2\pi$, because of the ramification. In fact, $f^*\eta_P$ is a Jenkins-Strebel differential associated to the pair 
$(C,f^{-1}q_\infty)$. It is characterized by the property that its double residue at each point of $f^{-1}q_\infty$ is $-9$. Its associated metrized ribbon graph is $f^{-1}D_P$ and has $f^{-1}a_P$ as its set of 
genuine vertices (i.e., vertices that are not of order $2$). A connected component of $C\ssm f^{-1}D_P$ triply covers the open disk $P\ssm D_P$ with ramification over $q_\infty$ and hence is itself an open disk.
When $a_P\notin Q$, the closure of such a component has the structure of a one point compactification of a metrical  product of a circle 
of circumference  $6\pi$ and a Euclidean half line $\Rb_{\ge 0}$. Its boundary is a $9$-gon contained  in $f^{-1}D_P$ which has as vertex set  its intersection with  $f^{-1}a_P$. Each of its nine edges contains a ramification point of $f$. So this gives $C$ a canonical $\Af_5$-invariant cell structure.

Let us see what happens when two points of $Q$ coalesce to a point $q_0$. We denote by $q_1$ the inert point so that now $Q_0=\{q_0,q_1\}$ and  
(in anticipation of the orbifold structure we have yet to define) we denote the projective line with the three points $q_\infty, q_0, q_1$ by $P_0$.
Then $\eta_{P_0}$ and $D_{P_0}$ have the simple form (\ref{eqn:simple}) displayed above.
The $\Af_5$-covers over such a degenerate orbifold $P_0$ are nodal curves such that the preimage of $q_0$ is the set  of nodes. 
Let $C_0$ be the normalization of an irreducible  component of the associated cover.  Theorem \ref{theorem:moduli1} tells us that  $C_0$ is a copy of $\Pb^1$ and has 
 stabilizer  conjugate to  $\Sf_3^{\ev}$ (Petersen graph), $\Af_4$ ($K_5$ graph) or  $\Af_5$ (irreducible case).  The  map $f_0: C_0\to P_0$ has ramification of order 
$3$  over $q_\infty$ and of order $2$ over $q_1$ as before, but the ramification over $q_0$ depends on the case.  

When $C$ is smooth, any point of $C$ lying over $Q$ has stabilizer of order $2$.  When two such points are forced to merge in a single branch point of $C_0$
then this point on $C_0$ has as its stabilizer the  subgroup of the generated by two elements of order $2$ in the stabilizer of $C_0$.
This will be of order $2$ in the Petersen case, $3$  in the  $K_5$  case and $5$ in the dodecahedral (irreducible) case. In other words, we may regard $q_0$ as an orbifold point for the cover $C_0\to P_0$ of order $2$, $3$ or $5$ respectively.

The pull-back $f_0^*\eta_{P_0}$  will  still be a Jenkins-Strebel differential, and its order of vanishing in a point of $f_0^{-1}q_0$ will be respectively 
$0$, $1$ and $3$. In the last two cases, the associate graph is just $f_0^{-1}D_{P_0}$, but in the  first case this 
graph is in fact empty:  each connected component is an infinite cylinder foliated by circles, one of these being $f_0^{-1}D_{P_0}$. We therefore put $D_{C_0}:=f_0^{-1}D_{P_0}$. So in the Petersen case, $D_{C_0}$ is a circle containing the three points of $f_0^{-1}q_0$ and the three points of 
$f_0^{-1}q_1$.  In the $K_5$ case resp.\   dodecahedral case,  each connected component of  $D_{C_0}$ is the $1$-skeleton of a regular tetrahedron resp.\ icosahedron   having $f_0^{-1}q_0$ as vertex set and $f_0^{-1}q_1$ as the set of midpoints of the edges.  These regular polyhedra are however punctured at  the barycenters of their faces and the piecewise flat metric makes every closed face  deprived from its barycenter   a half cylinder. In other words,  in each of the three cases every face---a solid equilateral triangle---is replaced by the product of its boundary and a Euclidean half line. (So from this perspective we end up with an icosahedral model rather than a dodecahedral model as depicted in Figure \ref{figure:Dodec1}.)

We thus obtain another combinatorial model of the Wiman-Edge pencil. It differs from the one we found in \S\ref{subsect:combmodel1}. For example, if $Q$ has cyclic symmetry of order $3$ (so that we can take our affine coordinate $z$ such that $z(Q)$ are the third roots of unity), then the graph considered in \S\ref{subsect:combmodel1} is the triangle spanned by $Q$, whereas $D_P$ 
consists of the three rays emenating from the barycenter of this triangle to its vertices.  But the underlying tesselation of $\Bc $ is the same as for the model based on affine geometry, because the loci of exceptional orbifolds has the same characterization. For example, when $ \ell_1=\ell_2$ means that $q_3$ is equidistant to $q_1$ and $q_2$. We then have a real structure on $P$ for which $\{q_1, q_2\}$ is a conjugate pair and 
$\eta_P$ will be defined over $\Rb$ relative to this structure. The situation is similar when the three points of $Q$ are collinear.

\subsection{A model based on hyperbolic geometry} 
We here endow each smooth member $C_t$ of the Wiman-Edge pencil with the unique hyperbolic metric compatible with its complex structure.
The $\Af_5$-action on $C_t$ by biholomorphic automorphisms then preserves this hyperbolic metric, giving its orbit space the structure of a hyperbolic orbifold. We therefore begin our discussion with considering such orbifolds. 

So let $P$ be an orbifold of type $(0; 3,2,2,2)$ in the holomorphic category, but endowed 
with the associated complete hyperbolic structure: so the total angle at the order $3$ point $q_\infty$ is $2\pi/3$ and the total angle at an order 
$2$ point $q\in Q$ is $\pi$.   Let $\delta$ be a geodesic arc in $P$ connecting two orbifold points of order $2$ which does not pass through 
the other orbifold points. If $f:C\to P$ is an $\Af_5$-cover, then $f^{-1}\delta$ consists of a collection of pairwise disjoint closed geodesics. 
These geodesics  decompose $C$ into hyperbolic genus  zero  surfaces-with-boundary.  Note that if we let the two points of $Q$ connected by $\delta$ coalesce to a single point $q_0$, then we obtain we have  a complete orbifold  hyperbolic structure on $P_0\ssm\{ q_0, q_\infty\}$ (which  induces a  complete hyperbolic structure on its orbifold covering). 

The distance between $\delta$ and the remaining point $q_1$ of $Q$ will be realized be a geodesic $\delta'$.  If $\delta'$ hits $\delta$ in its interior, 
then it will do so orthogonally. If $\delta'$ meets $\delta$ in an endpoint, then since point is an orbifold point of order $2$, the passage to a local orbifold cover shows that $\delta'$ meets $\delta$ in the opposite direction (so that both lie on a  complete geodesic).
Since $q_\infty$ is on orbifold point of order $3$,  $\delta'$ does not pass through $q_\infty$. So $U:=P\ssm\{\delta\cup\delta'\}$ is  a topological disk
having $q_\infty$ as an interior point.  This hyperbolic decomposition  of $P$ gives rise to a paving of $C$
into hyperbolic right-angled hexagons whose alternating sides have equal length, namely $2\ell(\delta)$ and $2\ell(\delta')$. To see this, note  that
each connected component $\tilde U$ of $f^{-1}U$ triply covers $U$ with  the unique point $\tilde q_\infty\in \tilde U$ that lies over  $q_\infty$ 
as the only point of ramification. In particular, the boundary $\partial \tilde U$ triply covers $\delta\cup\delta'$  and we then easily find that $\tilde U$  is indeed the interior of a hyperbolic 
right-angled hexagon of the asserted type. 

Noteworthy is the case when $P$ is invariant under an involution, as this  yields the Wiman curve. Then there exists an affine coordinate 
$z$ for $P\ssm \{q_\infty\}$ such that $z(Q)=\{-1,0,1\}$, the involution taking $z$ to $ -z$. We take here $\delta\cup\delta'=[-1,1]$ (which is 
indeed the union of two geodesics). So then our right-angled hexagons have all sides of equal length. The right angles are at points of  the orbit $f^{-1}(0)$ and the two orbits $f^{-1}(1)$  and $f^{-1}(-1)$ consists of midpoints of edges. When the involution lifts to $C$, then this lift and the covering transformations endow  $C$ with an $\Sf_5$-action, so that $C$ will be a copy of the Wiman curve.

If we succeed in shrinking $\delta$ to a point $q_0$, while taking along a $\delta'$ as above, then $\alpha'$ becomes a geodesic ray $\alpha'_0$ from $q_1$ to the cusp $q_0$. Now $f_0^{-1}\alpha'_0$ will be a tesselation of $C_0\ssm f_0^{-1}q_0$ into ideal triangles (which may be regarded as degenerate right angled hexagons). But we do not know whether we can do this in a uniform manner as we did for  piecewise flat structures
on $P$ using Jenkins-Strebel differentials.

\section{The Wiman curve via hyperbolic triangle groups}
\label{section:triangle}
The goal of this  section is to investigate the natural hyperbolic structure on the Wiman curve. 
We shall later establish that it is in fact a Shimura curve of indefinite quaternionic type.

\subsection{Spinor norm}
This subsection merely serves to recall the definition and a few simple properties related to 
the notion of spinor norm.  

Let $V$ be a finite-dimensional vector space over a field $F$ endowed with  a quadratic form $q: V\to F$. This means that $q(\lambda v)=\lambda^2 q(v)$ for $\lambda\in F$ and $v\in V$ and that the map $b:V\times V\to F$ given by 
\[
b(v,v'):=q(v+v')-q(v)-q(v')
\] is bilinear. Note that $b(v,v)=2q(v)$, so that we can recover $q$ from $b$ if ${\rm char}(F)\neq 2$. 
Given $u\in V$ with $q(u)\not=0$, then the orthogonal reflection $s_u:V\to V$ with respect to $u$   is the element of $\Orth (V)$ defined by 
\[
s_u(v):=v- q(u)^{-1}b(v,u)u.
\]  
It is clear that $s_u=s_{u'}$ if and only if  $u'=\lambda u$ for some $\lambda\in F^\times$ and so the image of $q(u)$ in 
$F^\times/(F^\times)^2$ is an invariant of $s_u$. The \emph{spinor norm} is a group homomorphism $\epsilon: \Orth (V)\to F^\times/(F^\times)^2$ characterized by the property that it assigns to $s_u$ the image of $q(u)$ in $F^\times/(F^\times)^2$. We denote its  kernel by $\Orth^\#(V)$.  When every element of $F$ is a square (which is for instance the case when $F=\Fb_2$ or $F=\Cb$), then  $\Orth (V)=\Orth^\#(V)$ and this notion is devoid of interest.

This is not so when $F=\Rb$, for we may then identify  $F^\times/(F^\times)^2$ with $\{\pm 1\}$. In that case we have the following alternate definition of the spinor norm. If  $q$ has signature $(p,n)$, then the 
negative definite  linear subspaces $W\subset V$ of dimension $n$ make up a contractible open subset $\Db (q)$ in the Grassmannian (it is the 
symmetric space of $\Orth (V)$) and hence the tautological $n$-plane bundle over $\Db (q)$ is trivial. So this bundle can be oriented. Any reflection $s_u:V\to V$ will leave invariant a negative definite  $n$-plane $N$.  It will act on $N$ as the identity resp.\ as a reflection (and hence be 
orientation-reversing in $N$) if $q(u)>0$ resp.\ $q(u)<0$.
Thus the spinor norm  of $g\in \Orth (V)$ is $1$ if and only if the action of  $g$ on this bundle is orientation preserving. The spinor norms attached to $q$ and $-q$ define a homomorphism $\Orth (V)\to \{\pm 1\}\times\{\pm 1\}$ whose kernel is the identity component of $\Orth(V)$; if $q$ is indefinite, then this map is also onto. Note that the product of these two spinor norms is the determinant $\det: \Orth(V)\to \{\pm1\}$. In particular, the
two spinor norms coincide on $\SO (V)$, and  the common value on  $g\in\SO (V)$  is $1$ if and only if it induces an orientation preserving diffeomorphism of $\Db(q)$.

\subsection{Triangle groups of compact hyperbolic type} For a triple $(p_1,p_2,p_3)$ with $p_i\in\{2,3,4, \dots, \infty\}$ (and here ordered as $p_1\ge p_2\ge p_3$), 
the {\em triangle group} $\Delta(p_1,p_2,p_3)$ is the Coxeter group defined by generators $s_1,s_2,s_3$ subject to the relations $s_i^2=(s_{i-1}s_{i+1})^{p_i}=1$, where we  let $i$ run  over $\Z/(3)$.  The Tits reflection representation (as described for example in Bourbaki) makes 
$\Delta(p_1,p_2,p_3)$ a subgroup of 
$\GL_3(\Rb)$, as follows: we define a symmetric bilinear form $b: \Rb^3\times  \Rb^3\to \Rb$ by $b(e_i, e_i)=1$ and $b(e_i, e_j):=-\cos (\pi/p_{i+j})$ for $i\not=j$; we then assign to $s_i$ the $b$-orthogonal reflection with respect to $e_i$:
\[
s_i(x)=x-2b(x, e_i)e_i.
\]
The reflection representation is actually  defined to be the dual of this representation: a fundamental chamber is  then given by the $\xi\in (\Rb^3)^\vee$ that are positive on
each $e_i$. When $b$ is nondegenerate, the passage to the dual is not necessary, 
as this representation is then self-dual and we can take as the fundamental chamber the set defined by $b(x, e_i)> 0$.
The index two subgroup $\Delta^+(p_1,p_2,p_3)\subset \Delta(p_1,p_2,p_3)$ of 
orientation-preserving elements is generated by
$g_i:=s_{i-1}s_{i+1}$ ($i\in\Z/3$) and has as a complete set of relations $g_i^{p_i}=1$ (for $p_i=\infty$ read this as the empty relation) and $g_1g_2g_3=1$.
\\

Now assume that we are in the compact hyperbolic  case: this means that each $p_i$ is finite and that $1/p_1 +1/p_2+1/p_3<1$.
Then $b$ is indeed nondegenerate with  signature $(2,1)$ 
and there is a unique connected component $C^+$ of the of set  $b(x,x)<0$ for which the  closure of the fundamental chamber is contained in $C^+\cup\{ 0\}$. So 
the projection of $C^+$ in the real projective plane $\Pb^2(\Rb)$ is a hyperbolic disk $\Db$ (which is in fact the symmetric space of $\Orth(b)$) and the closed fundamental chamber defines a solid geodesic triangle $\Pi$ in $\Db$. The classical Schwarz theory (which is here subsumed by the theory of reflection groups) tells us that $\Delta(p_1,p_2,p_3)$ 
acts faithfully and properly discretely on  $\mathbb{D}$ and has the compact $\Pi$ as a strict fundamental 
domain. So for any $i\in \Z/(3)$, the hyperbolic quadrangle 
$\Pi\cup s_i\Pi$ serves as a fundamental domain for $\Delta^+(p_1,p_2,p_3)$.

\subsection{The triangle group attached to the Wiman curve}
\label{subsection:triangle}
Let us now focus on the case of interest here, where $(p_1,p_2, p_3)=(6,4,2)$.  
So then 
\begin{align*}
b(e_2, e_3)&=-\cos (\pi/6)=-\tfrac{1}{2}\sqrt{3},\\
b(e_3, e_1)&=-\cos (\pi/4)=-\tfrac{1}{2}\sqrt{2},\\ 
b(e_1, e_2)&=-\cos (\pi/2)=0.
\end{align*}

This can be made part of system of root data in several ways.
We take $\alpha_1:=2e_1$ and $\alpha_2:=\sqrt{6}e_2$, $\alpha_3:=\sqrt{2}e_3$ so that
\[
b(\alpha_i, \alpha_j)=
\begin{pmatrix}
4 & 0 & -2\\
 0 & 6 & -3\\
 -2 & -3 & 2\\
\end{pmatrix}.
\]
This allows us to define  a  generalized root system for which the $\alpha_i$ are 
the simple roots  and  $\alpha_i{} ^\vee:=2b(\alpha_i, -)/b(\alpha_i, \alpha_i)$ is the  coroot corresponding to $\alpha_i$. Then  $\alpha_i{}^\vee(\alpha_j)$ 
(an entry of the Cartan matrix) is integral and $s_i(x)=x-2\alpha_i{}^\vee (x)\alpha_i$. So the \emph{root lattice} 
\[
L:=\Z \alpha_1+\Z\alpha_2+\Z\alpha_3
\]
is  invariant under $\Delta(6,4,2)$ and $b$ is an even, integral, symmetric bilinear form on $L$. In particular, we have an associated quadratic form $q: L\to \Zb$, $q(v)=\tfrac{1}{2}b(v,v)$. (We could have taken $\alpha_1$ to be $e_1$ instead of $2e_1$, but then $b$ would not be even and so $q$ would not be $\Zb$-valued.) The  discriminant of $b$ is easily computed to be $-12$. 

Let us observe  that if we write $x=\sum_ix_i\alpha_i$, then 
\begin{multline*}
2q(x)=b(x,x)=4x_1^2+6x_2^2+2x_3^2-4x_1x_3-6x_2x_3=(2x_1-x_3)^2-3x_2^2+(x_3-3x_2)^2,\\
\end{multline*}
If we reduce this  modulo $3$, we get $(2x_1-x_3)^2 +x_3^2$  and this  quadric has no solution in $\Pb^2(\Fb_3)$.
So the conic $K_q\subset \Pb^2$ defined by $q$ has no rational point.

We thus get  $\Delta(6,4,2)\hookrightarrow \Orth(q)$ and hence 
$\Delta^+(6,4,2)\hookrightarrow \SO(q)$. Vinberg's theory of hyperbolic reflection groups shows that $\Delta(6,4,2)$  the $\Orth(q)$-stabilizer of the cone $C^+$, that is, to $\Orth^\#(q)$. 
So $\Delta^+(6,4,2)= \SO^\#(q)$.  

Let us consider the reduction mod  $5$ of $q$,  
\[
q_{\Fb_5}: L_{\Fb_5}\to \Fb_5.
\]
Since $b$ has discriminant $-12$, we know that $b_{\Fb_5}$ is nondegenerate, but as $-12$ is not a square mod $5$, it  is not equivalent  to the standard diagonal form modulo $5$.   The 
identity 
\[
q(-\alpha_1+2\alpha_2+2\alpha_3)=10
\]
shows that $K_q(\Fb_5)$ is nonempty, and so we can identify $K_q(\Fb_5)$ with $\Pb^1(\Fb_5)$.
We thus obtain a homomorphism  
\[
\Orth(q)\to \Aut (K_b(\Fb_5))\cong \PGL_2(\F_5).
\] 
It is known that $\PGL_2(\F_5)\cong \Sf_5$.

\begin{lemma}\label{lemma:homom3}
The natural homomorphism $\SO^\#(q)\to\Aut (K_b(\Fb_5))\cong \Sf_5$ is surjective with kernel the principal level $5$ subgroup  of
$\SO^\#(q)$ (denoted here by $\G_5$). The group $\G_5$  is torsion free so that the image of each $g_i$ in $\PGL_2(\F_5)$ has the same order as $g_i$.
\end{lemma}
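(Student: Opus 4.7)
My plan is to factor the homomorphism as
\[
\SO^\#(q)\xrightarrow{\;\text{red}_5\;} \SO(q_{\Fb_5})\xrightarrow{\;\text{act}\;} \Aut(K_b(\Fb_5))\cong \PGL_2(\Fb_5),
\]
and to address kernel identification, torsion-freeness of $\Gamma_5$, and surjectivity in three separate steps.

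First I would identify the kernel. The kernel of $\text{red}_5$ is $\Gamma_5$ by the very definition of the principal level $5$ subgroup, so it suffices to show that the action map $\SO(q_{\Fb_5})\to \Aut(K_b(\Fb_5))$ is injective. If $g\in \SO(q_{\Fb_5})$ fixes $K_b(\Fb_5)$ pointwise, then after extending scalars to $\overline{\Fb}_5$ it fixes every isotropic line, and since these span $L_{\Fb_5}$, $g$ must act as a scalar $\lambda$; the compatibility with $q$ and the determinant condition $\lambda^3=1$ (in $\Fb_5^\times\cong\Zb/4$) then force $\lambda=1$. A cardinality match ($|\SO_3(\Fb_5)|=120=|\PGL_2(\Fb_5)|$) promotes this injection to an isomorphism, so the kernel of the full composite is $\Gamma_5$, as claimed.

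Next I would show $\Gamma_5$ is torsion-free. This is an instance of the classical Minkowski/Serre principle that a principal level-$N$ congruence subgroup of $\GL(L)$ is torsion-free whenever $N\ge 3$. Concretely, if $g=I+5^{k}N\in \Gamma_5$ had prime order $\ell$ (with $k\ge 1$ and $N$ an integral matrix not divisible by $5$), expanding $g^{\ell}=I$ by the binomial theorem and inspecting the lowest power of $5$ forces $N\equiv 0\pmod{5}$ in both cases $\ell=5$ and $\ell\ne 5$, contradicting the choice of $N$.

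Surjectivity then comes for free. For each standard generator $g_i$ of $\Delta^+(6,4,2)=\SO^\#(q)$, of order $p_i\in\{6,4,2\}$: if its image in $\PGL_2(\Fb_5)$ had order $m<p_i$, then $g_i^{m}$ would be a nontrivial torsion element of $\Gamma_5$, contradicting the previous step. Hence the orders are preserved, so the images of $(g_1,g_2,g_3)$ give an element of $\Sf_5(6,4,2)$, which by Lemma~\ref{lemma:s5} is a single principal $\Sf_5$-orbit. In particular these images generate all of $\Sf_5\cong \PGL_2(\Fb_5)$.

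I expect the most delicate step to be the identification $\SO(q_{\Fb_5})\cong \PGL_2(\Fb_5)$ via the conic action---the scalar argument together with the cardinality match. Once this identification is secured, the Minkowski-style binomial calculation and the elementary Lemma~\ref{lemma:s5} together close the proof without further effort.
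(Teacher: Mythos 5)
Your overall architecture (kernel by a scalar argument, torsion-freeness by the Minkowski/Serre congruence lemma, surjectivity via preservation of orders of the $g_i$) is the same as the paper's; the binomial-expansion argument you spell out is exactly the content of what the paper cites as ``Serre's observation.'' However, your surjectivity step contains a circularity. You assert that the images of $(g_1,g_2,g_3)$ ``give an element of $\Sf_5(6,4,2)$,'' and then invoke Lemma~\ref{lemma:s5} to conclude that they generate $\Sf_5$. But by Definition~\ref{definition:G}, $\Sf_5(6,4,2)$ consists precisely of triples that generate $\Sf_5$; membership in $\Sf_5(6,4,2)$ already presupposes the generation you are trying to prove. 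Lemma~\ref{lemma:s5} only tells you the \emph{generating} triples of type $(6,4,2)$ form a single orbit; it is silent about whether a given triple with those orders generates.

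The gap is easy to close, and the paper does so by a slightly different route: it checks directly that the image of $(s_1s_2s_3)^2$ has order $5$, so that the image contains elements of orders $4$, $5$, $6$ and therefore must be all of $\Sf_5$. An equally quick patch in your framework is to observe that no proper subgroup of $\Sf_5$ contains both an element of order $6$ and an element of order $4$: among the maximal subgroups $\Af_5$, $\Sf_4$, $F_{20}$, and $\Sf_3\times\Sf_2$, the first three have no element of order $6$ and the last has no element of order $4$. Once one of these arguments is inserted, your proof goes through and is otherwise essentially identical to the paper's. (A minor stylistic point: your claim that an element of $\SO(q_{\Fb_5})$ fixing each isotropic line must be a scalar is fine, but deserves one sentence — e.g.\ the six points of $K_b(\Fb_5)\subset\Pb(L_{\Fb_5})$ contain four in general position, and a projective transformation of $\Pb^2$ fixing four points in general position is the identity; the paper instead passes through the isomorphism $\Sym^2 K_b\cong\check\Pb^2$.)
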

\begin{proof}
Let us first show that the kernel is as asserted. Suppose $g\in \SO^\#(q)$ acts as the identity on $K_b(\Fb_5)$.  Let  $\overline\Fb_5$ be an algebraic closure of $\Fb_5$. An automorphism of 
$\Pb^1(\overline\Fb_5)$ which is the identity on $\Pb^1(\Fb_5)$ must be the identity (as it fixes three distinct points). So $g$ acts as the identity on
$K_b(\overline\Fb_5)$. Since we have a natural  identification of  $\Sym^2(K_b(\overline\Fb_5))$ with $\check\Pb^2(\overline\Fb_5)$ (a line in $\Pb^2(\overline\Fb_5)$ meets $K_b(\overline\Fb_5)$ in a degree 2 divisor), it follows that
$g$ acts as the identity on $\check\Pb^2(\overline\Fb_5)$ and hence also as the identity on $\Pb(L_{\Fb_5})$.
So $g\in \SO^\#(q)$ will act on $L_{\Fb_5}$ as scalar multiplication, say by $\lambda\in \Fb_5^\times$. Since $\det (g)=1$, we must have 
$\lambda^3=1$. Since, $\Fb_5^\times$ is cyclic of order $4$, it follows that $\lambda=1$.

For the remaining part we use Serre's observation that if an element of $\GL(n, \Z)$ has finite order, then  for any $\ell\ge 3$, 
its  mod  $\ell$ reduction  has the same order. We thus find that $g_1,g_2$ define in $\Sf_5$ elements of order $6$ and  $4$ respectively such that $g_1g_2$ has order $2$.  This does not yet imply that $g_1$ and $g_2$ generate $\Sf_5$: we must exclude the possibility that they generate a subgroup conjugate to  
$\Sf_3\times\Sf_2$). So it suffices to find an element in $\Delta^+(6,4,2)$  whose image in $\Aut (K_b(\Fb_5))$ has order $5$, or in view of the preceding,
whose image in $\SO^\#(q_{\Fb_5})$ has order $5$. One checks that $(s_1s_2s_3)^2\in \Delta^+(6,4,2)$ has this property.
\end{proof}

It follows that $\G_5$ acts freely on $\Db$, so that $\G_5\bs \Db$ is a compact orientable hyperbolic surface. It comes with a faithful 
action of $\SO^\#(L_{\Fb_5})\cong \Sf_5$ and a Riemann-Hurwitz count shows that it has genus $6$. We conclude:

\begin{theorem}\label{theorem:wiman:spinor3}
If we endow $\Db$ with an orientation such that $\G_5\bs \Db$ becomes a compact connected Riemann surface of genus $6$ with a faithful action of $\SO^\#(L_{\Fb_5})\cong \Sf_5$, then $\G_5\bs \Db$ is isomorphic to the Wiman curve. 
\end{theorem}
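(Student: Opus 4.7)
My strategy is to verify that $\G_5\bs\Db$ meets the hypotheses of Theorem~\ref{theorem:moduli0} and then invoke the uniqueness assertion of that theorem: there is, up to isomorphism, only one compact connected Riemann surface of genus $6$ admitting a faithful $\Sf_5$-action, namely the Wiman curve.

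First I would show that $\G_5\bs\Db$ is a compact connected Riemann surface. Since the triple $(6,4,2)$ is of compact hyperbolic type, the quotient $\SO^\#(q)\bs\Db=\Delta^+(6,4,2)\bs\Db$ is a compact hyperbolic orbifold of type $(0;6,4,2)$, so the degree-$120$ cover $\G_5\bs\Db\to \SO^\#(q)\bs\Db$ is likewise compact. By Lemma~\ref{lemma:homom3}, $\G_5$ is torsion free, hence acts freely and properly discontinuously on $\Db$; this endows $\G_5\bs\Db$ with a natural complex structure. I orient $\Db$ so that $\SO^\#(q)$ acts by holomorphic automorphisms, which is possible because (by the paper's discussion of the spinor norm) $\SO^\#(q)$ consists precisely of the elements of $\SO(q)$ acting as orientation-preserving isometries of $\Db$.

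Next I would compute the genus via the Riemann--Hurwitz formula applied to this orbifold cover. Torsion-freeness of $\G_5$ forces full ramification at each cone point, so over the order-$n$ cone point (with $n\in\{2,4,6\}$) there are exactly $120/n$ preimages, each of ramification index $n$. Denoting the genus of the base by $g_0=0$,
\[
2g-2 \;=\; 120(2g_0-2) \;+\; \tfrac{120}{6}(6-1) \;+\; \tfrac{120}{4}(4-1) \;+\; \tfrac{120}{2}(2-1) \;=\; -240+100+90+60 \;=\; 10,
\]
yielding $g=6$.

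Finally I would check faithfulness of the residual action of $\SO^\#(q)/\G_5\cong\Sf_5$ (Lemma~\ref{lemma:homom3}) on $\G_5\bs\Db$. If some $g\in \SO^\#(q)$ acts trivially, then for every $z\in \Db$ there is $\gamma(z)\in\G_5$ with $gz=\gamma(z)z$; choosing any $z$ outside the discrete locus of points with nontrivial $\SO^\#(q)$-stabilizer forces $\gamma(z)^{-1}g=1$, so $g\in\G_5$. Thus $\G_5\bs\Db$ is a compact connected Riemann surface of genus $6$ carrying a faithful $\Sf_5$-action, and Theorem~\ref{theorem:moduli0} identifies it with the Wiman curve. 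The entirety of the nontrivial content---the identification $\SO^\#(q)/\G_5\cong\Sf_5$ via the conic $K_q(\Fb_5)$ and the torsion-freeness of $\G_5$---has already been absorbed by Lemma~\ref{lemma:homom3}, so beyond that lemma no serious obstacle is expected; the argument is essentially topological bookkeeping plus an appeal to uniqueness.
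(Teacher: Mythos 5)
Your argument is correct and follows the paper's own (largely implicit) line of reasoning: $\G_5$ acts freely on $\Db$ because it is torsion-free (Lemma~\ref{lemma:homom3}), the quotient is a compact connected Riemann surface of genus $6$ by a Riemann--Hurwitz count over the orbifold of type $(0;6,4,2)$, the residual $\SO^\#(q)/\G_5\cong\Sf_5$ acts faithfully, and uniqueness in Theorem~\ref{theorem:moduli0} finishes. Your genus computation and faithfulness argument both check out, and they merely fill in details the paper leaves as one-line assertions.
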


\begin{remark}[{\bf The antiholomorphic involution}]
   
The orientation reversing elements of $\Delta(6,4,2)$ are  reflections. There are three conjugacy classes of these, represented by $s_1,s_2, s_3$. That these are indeed distinct is a consequence of the evenness of the arguments of $\Delta(6,4,2)$,  for this implies that we have a surjection $\Delta(6,4,2)\to (\Z/2)^3$ which sends $s_i$ to the $i$th generator. Each $s_i$ determines a 
$\Sf_5$-conjugacy class of antiholomorphic involutions of $C$. We can interpret this by saying that there are exactly $3$ isomorphism types of nonsingular  projective curves of genus $6$ with $\Sf_5$-action  that are defined over $\Rb$.
\end{remark}

\section{Modular interpretation of the Wiman curve}
\label{section:modular}

The goal of this section is to prove Theorem \ref{theorem:wiman:modular4}. As mentioned in the introduction we prove in fact somewhat more precise result (Theorem \ref{theorem:wiman:modular4a}).  We continue with the terminology used in \S\ref{subsection:triangle}.

\subsection{Hodge structures parametrized by the Wiman curve}
\label{subsect:HS}
We fix an orientation on $L_{\Rb}$. As we will explain, this  turns $\Db$ into symmetric domain for $\SO^\#(L_\Rb)$ which parametrizes weight zero Hodge structures on $L$.  

We first observe that an element of  $\Db$ is uniquely represented by a vector $f$  in the cone $C^+$ (defined in Subsection \ref{subsection:triangle}
with  $q(f)=-1$. We shall use this to identify $\Db$ with a connected component  of the hyperboloid in $L_\Rb$ defined by this identity.
Note that for $f\in \Db$ the orthogonal complement of $f$  is positive definite.   So if $(e', e'')$ is an orthogonal \emph{oriented basis} of this orthogonal complement then the complex structure 
on the tangent space of $\Db$ at $f$ is given by the transformation which takes $e'$ to $e''$ and $e''$ to $-e'$. Note that 
$e:=e'+\sqrt{-1}e''$ is the $-\sqrt{-1}$ eigenspace for this complex structure and that the map \[T_f\Db\to \Cb\otimes_\Rb T_f\Db\to \Cb\otimes_\Rb T_f\Db/\Cb e\] is then an isomorphism of complex lines. This shows that the map $\Db\to \Pb(L_\Cb)$ which assigns to $f$ the complex-linear span of $e$ is
holomorphic.  Note that $e$ has the property that $b_\C(e,e)=0$ and $b_\C(e, \overline e)>0$. This defines a weight zero Hodge structure on $L$ which only depends on $f$: 
$L^{1,-1}_f$, $L^{-1,1}_f$, $L^{0,0}_f$  is spanned by $e$, $\bar e$, $f$ respectively. This Hodge structure is polarized by $-b$.
The locus in $ \Pb(L_\C)$ defined $b_\C(e,e)=0$ and $b_\C(e, \overline e)>0$ has two connected components
and the map just defined $f\in \Db\mapsto [L^{1,-1}_f] \in  \Pb(L_\C)$ identifies $\Db$ with one of these in the holomorphic category. In particular, $\Db$ becomes a symmetric domain for $\SO^\#(L_\Rb)$.

Thus the Wiman curve parametrizes Hodge structures of this type. There exists an isometric embedding of $L$ in a $K3$-lattice (this is an even unimodular lattice of signature $(3, 19)$) and any two such  differ by an orthogonal transformation of the $K3$-lattice. This enables us to let the Wiman curve parametrize  $K3$-surfaces (in fact, Kummer surfaces), but we prefer to set up things in a more canonical fashion which for instance takes explicitly into account the $\Af_5$-symmetry. This involves  a Kuga-Satake construction in the spirit of Deligne \cite{deligne}.

\subsection{The Clifford algebra associated to our triangle group}

The Clifford algebra associated to the quadratic lattice $(L,q)$  is by definition the quotient $C(q)$ of the tensor algebra of $L$ by the 
$2$-sided ideal generated by the even tensors of the form $v\otimes v-q(v)$, $v\in L$. This algebra clearly comes with an action of $\Orth(q)$. 
The anti-involution which reverses the order of a pure tensor,
$a=v_1\otimes \cdots\otimes v_r\mapsto a^*:=v_r\otimes \cdots\otimes v_1$, descends to an anti-involution in $C(q)$. In view of the fact that
$aa^*=q(v_1)\cdots q(v_r)$, the map $a\in C(q)\to aa^*\in\Zb$ is a quadratic function  which extends $q$. We therefore continue to denote it by $q$;
it is known as the \emph{norm} on $C(q)$. So if $q(a)\not=0$, then $a$ is invertible in $C(q)_\Qb$ with $2$-sided inverse $q(a)^{-1} a^*$.

For $r=0,1,2,\dots$, the images of the tensors of order 
$\le r$  define a filtration of $C(q)$ as an algebra whose associated graded algebra is the exterior algebra of $L$. In particular, $C(q)$ is a free $\Zb$-module 
of rank $2^3=8$. 
The splitting into even and odd tensors defines a corresponding splitting $C(q)=C_+(q)\oplus C_-(q)$, with $C_+(q)$ being a subalgebra $C(q)$ of rank $4$. 

\begin{lemma}\label{lemma:}
The algebra $C_+(q)$ is a division algebra of indefinite quaternion type. In other words, for every $a\in C_+(q)\ssm\{0\}$, $q(a)\not=0$ and $C_+(q)_\Rb$ 
is isomorphic  to $\End_\Rb(\Rb^2)$
\end{lemma}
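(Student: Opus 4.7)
The plan is to identify $C_+(q)_\Q$ explicitly as a standard quaternion algebra $\bigl(\tfrac{a,b}{\Q}\bigr)$, and then verify the two assertions (anisotropy of the norm over $\Q$, and splitting over $\Rb$) by examining its reduced norm form.

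First I would diagonalize $q$ over $\Q$. Since $b(\alpha_1,\alpha_2)=0$, Gram--Schmidt applied to $(\alpha_1,\alpha_2,\alpha_3)$ produces (after rescaling $f_3$ by a rational square) an orthogonal basis $(f_1,f_2,f_3)$ of $L_\Q$ with $q(f_1)=2$, $q(f_2)=3$, $q(f_3)=-1$. The even Clifford algebra then has $\Q$-basis $\{1,I,J,IJ\}$ with $I:=f_1f_2$ and $J:=f_2f_3$, and the Clifford relations together with orthogonality of the $f_i$ give
\[
I^2 = -q(f_1)q(f_2) = -6,\qquad J^2 = -q(f_2)q(f_3) = 3,\qquad IJ = -JI.
\]
Hence $C_+(q)_\Q \cong \bigl(\tfrac{-6,\,3}{\Q}\bigr)$, and its reduced norm form is
\[
N(x_0,x_1,x_2,x_3) \;=\; x_0^2 + 6x_1^2 - 3x_2^2 - 18 x_3^2.
\]

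To prove $C_+(q)$ is a division algebra, it suffices to show $N$ is anisotropic over $\Q$. The natural place to work is $p=3$, via a descent argument directly modelled on the one already given in the excerpt to rule out $\F_3$-points of the conic $K_q$. A primitive integral zero of $N$ would force $3\mid x_0$; dividing by $3$ produces $3y_0^2+2x_1^2-x_2^2-6x_3^2=0$, whose mod-$3$ reduction forces $x_1\equiv x_2\equiv 0\pmod 3$ (since $2$ is a non-square mod $3$); a third round of descent uses that $-1$ is also a non-square mod $3$ to force $y_0$ and $x_3$ to vanish mod $3$, contradicting primitivity. Equivalently, the Hilbert symbol $(-6,3)_3$ equals $-1$, so $C_+(q)_\Q$ is ramified at $3$ and therefore a division algebra.

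For the indefiniteness, the form $N$ has signature $(2,2)$ over $\Rb$ and so is isotropic; thus $C_+(q)_\Rb$ is a split quaternion algebra over $\Rb$, and the only such algebra is $M_2(\Rb)=\End_\Rb(\Rb^2)$. (Equivalently, by the classical structure theory, the even Clifford algebra of a nondegenerate real $3$-dimensional form of signature $(2,1)$ is $M_2(\Rb)$.) The main obstacle is bookkeeping rather than genuine difficulty: one must keep sign conventions straight in the Clifford-to-quaternion dictionary (a sign error replaces the quaternion algebra by a different one, potentially changing the ramification set) and execute the mod-$3$ descent cleanly to ensure one really lands in a ramified prime.
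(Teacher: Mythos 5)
Your proof is correct, but it follows a genuinely different route from the paper's. The paper relies on the anisotropy of $q$ over $\Qb$ (which it has already established by showing the conic $K_q$ has no $\Fb_3$-point) together with a short dimension-count in the Clifford algebra: for a nonzero $a\in C_+(q)_\Qb$, both $L_\Qb$ and $aL_\Qb$ are $3$-dimensional subspaces of the $4$-dimensional space $C_-(q)_\Qb$, so they meet nontrivially; choosing a nonzero $\beta\in L_\Qb$ with $a\beta\in L_\Qb$ and using multiplicativity $q(a\beta)=q(a)q(\beta)$ and anisotropy of $q|_{L_\Qb}$ forces $q(a)\neq 0$. You instead diagonalize $q$, identify $C_+(q)_\Qb$ with the explicit quaternion algebra $\bigl(\tfrac{-6,\,3}{\Qb}\bigr)$, and run a $3$-adic descent on the reduced norm form (equivalently, compute $(-6,3)_3=-1$) to conclude that the algebra ramifies at $3$. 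Both arguments are sound and both ultimately trace back to the same local obstruction at $p=3$; the paper's route avoids the Clifford-to-quaternion dictionary and all reduced-norm bookkeeping, while yours produces an explicit Hilbert symbol, which is extra usable information (e.g., for finding the full ramification set). A small inaccuracy in your write-up: in the ``third round'' the relevant congruence is again $y_0^2\equiv 2x_3^2\pmod 3$, so it is once more the non-squareness of $2$ (equivalently $-1$, since $-1\equiv 2\pmod 3$) that is being used; this does not affect the conclusion. For the split behaviour over $\Rb$, the two arguments coincide: the norm form has signature $(2,2)$, hence is isotropic, hence $C_+(q)_\Rb\cong M_2(\Rb)$, which the paper phrases by saying $q_\Rb$ has hyperbolic signature.
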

\begin{proof}
The fact  that $q$ does not represent zero implies that $C_+(q)$  is a division algebra. For this we must show that $q(a)\not=0$  when $a\in C_+(q)\ssm \{0\}$. If $\beta\in L_\Qb$ is nonzero, then extend $\beta$ to an orthogonal basis $\beta=\beta_1, \beta_2, \beta_3$ of $L_\Qb$. Then it is clear that right multiplication with $\beta$ takes the basis 
$1, \beta_1\beta_2, \beta_2\beta_3, \beta_3\beta_1$ of $C_-(V)_\Qb$ to a basis of $C_-(V)_\Qb$.  In particular,  $a\beta\not=0$. It follows that
$a L_\Qb$ is a $3$-dimensional subspace of  the $4$-dimensional subspace $C_-(V)_\Qb$. So $\dim (L_\Qb\cap a L_\Qb)\ge 2$. Now choose $\beta\in  L_\Qb$ such that $a\beta$ is a nonzero element of $L_\Qb$. Then $0\not= q(a\beta)=q(a)q(\beta)$ and so $q(a)\not=0$. 

The assertion that $C_+(q)_\Rb\cong \End_\Rb(\Rb^2)$ follows from the fact that $q_\Rb$ has hyperbolic signature.
\end{proof}

Note that if $v,x\in L$, then  in $C(q)$ we have 
\[
vxv=v(-vx+b(x,v))=-q(v)x+b(x,v)v,
\]
which in case $q(v)\not=0$, is just $-q(v)$ times the orthogonal reflection $s_v$ in $v$. 

We define the group  $\spin (q_\Rb)$ as the group of units $u$ of $C_+(q)_\Rb$ with $q(u)=1$ and with the property that conjugation with $u$ preserves $L_\Rb$. This conjugation acts in $C(q)_\Rb$ as an orthogonal transformation and the resulting
group homomorphism $\spin (q_\Rb)\to\Orth (q_\Rb)$ has image $\SO^\# (q_\Rb)$ and kernel $\{\pm 1\}$. The group $\spin (q_\Rb)$ acts (faithfully) on $C_+(q)_\Rb$ by left multiplication  (this is the spinor representation) and also by right multiplication  after inversion. The two actions clearly commute and as we noted, the diagonal action (so given by conjugation) factors through  $\SO^\# (q_\Rb)$.

We write $\spin (q)$ for the preimage of  $\SO^\#(q)$ in  $\spin (q_\Rb)$ so that we have an exact sequence
\[
1\to \{\pm1\}\to \spin (q)\to \SO^\#(q)\to 1.
\]
We can use  the root basis $\alpha_1, \alpha_2,\alpha_3$ of $L$ to obtain presentations of $C(q)$ and $C_+(q)$. A presentation for $C(q)$
has these basis elements as generators and is such that  $\alpha_i\alpha_j+\alpha_j\alpha_i=b(\alpha_i, \alpha_j)$. So
\begin{center}
$\alpha_1\alpha_1=2,\; \alpha_2\alpha_2=3,\; \alpha_3\alpha_3=1$,\\
$\alpha_1\alpha_2+\alpha_2\alpha_1=0, \; \alpha_2\alpha_3+\alpha_3\alpha_2=-3, \; \alpha_3\alpha_1+\alpha_1\alpha_3=-2$.
\end{center}
In particular, $\alpha_1^{-1}=\tfrac{1}{2}\alpha_1$, $\alpha_2^{-1}=\tfrac{1}{3}\alpha_2$ and  $\alpha_3^{-1}=\alpha_3$. For $x\in L$,  
\[
\alpha_1x\alpha_1=-2s_1(x), \; \alpha_2x\alpha_2=-3s_2(x), \; \alpha_3x\alpha_3=-s_3(x).
\]

Note that $a_1:=\alpha_2\alpha_3$, $a_2:=\alpha_3\alpha_1$ and $a_3:=\alpha_1\alpha_2$ generate $C_+(q)$. In fact, $a_1$ and  $a_2$ will do, since $a_1a_2=-a_3$.  We also have:
\[
a_1^2+3a_1 +3=0,\; a_2^2+2a_2 +2=0,\; a_3^2+6=0
\]
Note that for $x\in L$, $a_ixa_i^{-1}=g_i(x)$, so that $\tilde g_i:=a_i/\sqrt{q(a_i)}\in \spin (q_\Rb)$ is a lift of $g_i$ and hence lies in $\spin (q)$. This implies that $\tilde g_1^{6}$, $\tilde g_2^{4}$ and $\tilde g_3^{2}$ are all equal to $-1$. One checks that in addition, $\tilde g_1\tilde g_2\tilde g_3=-1$. Since the $g_i$ generate $\Delta^+(6,4,2)=\SO^\#(q)$, the  $\tilde g_i$ will generate $\spin (q)$.

We identified  $\G_5\subset \SO^\#(q)$ as the principal level $5$ subgroup of $\SO^\#(q)$. So its preimage $\hat\Gamma_5$ in $\spin (q)$, which 
is central a extension of $\G_5$ by $\{\pm 1\}$,  is the subgroup of $\spin (q)$ that will act as the identity on the mod $5$ reduction of $L$. Hence
it will act trivially on the mod $5$ reduction of $C_+(q)$. The converse holds  up to sign, in the sense that if $g\in \SO^\#(q)$ acts as the identity on
$C_+(q)\otimes\Fb_5$, then $g\in \{ \pm 1\}.\G_5$. To see this, note that we may identify $C_+(q)/\Zb$ with $\wedge^2 L$. The assertion is then a consequence of the fact that if $g$ acts trivially on $(\wedge^2 L)_{\Fb_5}\cong \wedge^2_{\Fb_5} L_{\Fb_5}$, then $g$ acts as $\pm 1$ on
$L_{\Fb_5}$.
 
\subsection{Abelian surfaces parametrized by the Wiman curve}

We noted that every  $f\in \Db$ defines a weight zero Hodge structure  $L_f$ on $L$ polarized by $-b$. We extend this to a 
Hodge structure on the tensor algebra of  $L$. The $2$-sided ideal we divide out by to get $C(q)$ is a subHodge structure, and thus
$C(q)$ receives a Hodge structure. The decomposition  $C(q)=C_+(q)\oplus C_-(q)$  is then one into 
Hodge structures. This Hodge structure  has bidegrees $(1,-1)$, $(0,0)$ and $(-1,0)$ only. To be concrete, let
$e=e'+\sqrt{-1}e''\in L_\Cb$ be as in Subsection \ref{subsect:HS}, so that   $e$ spans $L^{1,-1}_f$ and $b(e,\bar e)>0$. Then  $f$ spans $L^{0, 0}_f$, $\bar e$ spans $L^{-1, 1}_f$ and  $(ef, e\bar e, \bar e e,  \bar e f)$ is a basis of $C_+(q)_\Cb$ (note that $e\bar e+\bar e e=1$) with bidegrees $(1,-1)$, $(0,0)$, $(0,0)$ and $(-1,1)$ respectively.
Now $j_f:=e'e''\in C_+(q)_\Rb$ satisfies the identity $\half e\bar e =1-\sqrt{-1}j_f$ and hence only depends on $L^{1,-1}_f$, is of bidegree $(0,0)$ and has the property that 
$j_fj_f=e'e''.-e''e'=-1$.  So  \emph{right} multiplication by $j_f$ defines a complex structure $J_f$ on $C_+(q)_\Rb$.  It takes $e'$ to $e''$ and $e''$ to $-e'$ and so $J_b$   lifts  the complex structure in $T_f\Db$ under the covering projection $\spin (q_\Rb)\to \SO^\# (q_\Rb)$.

 It is clear that this complex structure is preserved by the action of  $C_+(q)_\Rb$  on the left of $C_+(q)_\Rb$, so that this turns  $(C_+(q)_\Rb, J_f)$ into a complex representation of $C_+(q_\Cb)$ of dimension $2$. 

\begin{lemma}\label{lemma:cxstructure}
Every  complex structure on $C_+(q)_\Rb$ which leaves the left $C_+(q)_\Rb$-module structure invariant is equal to $J_f$ or $-J_f$ for some $f\in\Db$.
\end{lemma}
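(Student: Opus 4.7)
\medskip

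The plan is to reduce the question to classifying square roots of $-1$ in the quaternion algebra $C_+(q)_\Rb$, and then to identify that set with the two-sheeted hyperboloid $\Db\sqcup(-\Db)\subset L_\Rb$ via the Clifford pseudoscalar.

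First I would exploit that $C_+(q)_\Rb$, viewed as a left module over itself, is free of rank one, so that its algebra of left-$C_+(q)_\Rb$-linear endomorphisms is canonically $C_+(q)_\Rb^{\mathrm{op}}$ acting by right multiplication. Any complex structure $J$ on $C_+(q)_\Rb$ compatible with left multiplication is therefore of the form $x\mapsto xj$ for a unique $j\in C_+(q)_\Rb$, and $J^2=-\mathrm{id}$ becomes $j^2=-1$. By the previous lemma, $C_+(q)_\Rb$ is an indefinite quaternion algebra, so every element satisfies the Cayley--Hamilton identity $j^2-(j+j^*)j+q(j)=0$. Since a scalar cannot square to $-1$, the condition $j^2=-1$ is equivalent to $j$ being pure ($j^*=-j$) and of unit norm ($q(j)=1$); this cuts out a hyperboloid in the $3$-dimensional pure subspace $\wedge^2 L_\Rb\subset C_+(q)_\Rb$.

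To parametrize this hyperboloid I would use the orientation of $L_\Rb$ fixed in Subsection~\ref{subsect:HS} to define the Clifford pseudoscalar $\omega\in C(q)_\Rb$. Since $\dim L=3$, $\omega$ lies in the center of $C(q)_\Rb$, and because $q$ has signature $(2,1)$ a short computation gives $\omega^2=1$. Left multiplication by $\omega$ is then a linear bijection $L_\Rb\xrightarrow{\sim}\wedge^2 L_\Rb$, $v\mapsto\omega v$, and from $(\omega v)^2=\omega^2 v^2=q(v)$ combined with the purity of $\omega v$ one deduces $q(\omega v)=-q(v)$. Consequently the set of $j$ with $j^2=-1$ is $\omega\cdot\{v\in L_\Rb:q(v)=-1\}$, a two-sheeted hyperboloid with connected components $\omega\Db$ and $-\omega\Db$.

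Finally I would match up with the family $\{j_f\}_{f\in\Db}$. Writing $(e',e'',f)$ for a positively oriented $q$-orthonormal frame of $L_\Rb$, so that $q(e')=q(e'')=1$ and $q(f)=-1$, we have $\omega=e'e''f$ and hence $\omega f=q(f)\,e'e''=-e'e''=-j_f$. Therefore as $f$ ranges over $\Db$ the element $j_f=-\omega f$ sweeps out exactly the component $-\omega\Db$, and negation sweeps out the other component $\omega\Db=\{-j_f:f\in\Db\}$. This shows every $j$ with $j^2=-1$ is equal to $j_f$ or $-j_f$ for some (necessarily unique) $f\in\Db$, which is the statement of the lemma. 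The main obstacle I anticipate is not conceptual but combinatorial: a careful sign check to make sure that the orientation conventions used to define $j_f$ in Subsection~\ref{subsect:HS} agree with those used to normalize $\omega$, so that the identity $j_f=-\omega f$ (rather than $+\omega f$) truly holds and the two families $\{\pm j_f\}_{f\in\Db}$ land on the two distinct sheets of the hyperboloid of square roots of $-1$.
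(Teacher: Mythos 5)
Your proposal is correct, but it takes a genuinely different route from the paper. The paper's proof is a quick ``transfer'' argument: it chooses an $\Rb$-algebra isomorphism $C_+(q)_\Rb\cong\End_\Rb(W)$ with $\dim_\Rb W=2$, observes that a left-module-compatible complex structure on $\End_\Rb(W)\cong W\otimes W^\vee$ is the same thing as a complex structure on $W^\vee$, and then notes that $\SL(W^\vee)$ acts on the set of such with exactly two orbits, distinguished by orientation; matching orientations identifies the two orbits with $\{J_f\}_{f\in\Db}$ and $\{-J_f\}_{f\in\Db}$. Your argument stays inside the Clifford algebra: you reduce to square roots of $-1$ via right multiplication, identify them as the pure unit-norm elements, and then use the centrality and normalization of the pseudoscalar $\omega$ (with $\omega^2=1$ in signature $(2,1)$, $q(\omega v)=-q(v)$) to carry the two-sheeted hyperboloid $\{q=-1\}$ in $L_\Rb$ onto the locus $\{j^2=-1\}$, finally verifying the explicit sign identity $j_f=-\omega f$. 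Your version buys a more explicit, coordinate-free bijection that pins down which $f$ corresponds to which $j$ (useful for the subsequent sign bookkeeping in \S\ref{subsect:HS}), at the cost of the Cayley--Hamilton and pseudoscalar computations; the paper's version is shorter but leaves the matching implicit in the choice of isomorphism with $\End_\Rb(W)$. One small presentational caveat: the identification of the trace-zero part of $C_+(q)_\Rb$ with $\wedge^2 L_\Rb$ is not given by the $a_i=\alpha_{i+1}\alpha_{i+2}$ directly (those have nonzero trace since the $\alpha_i$ are not pairwise orthogonal); your use of a $q$-orthonormal frame $(e',e'',f)$ sidesteps this, but it is worth flagging so the reader does not conflate the filtration splitting chosen in the paper with the trace-zero splitting you use.
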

\begin{proof}
This is perhaps best seen by choosing an $\Rb$-algebra isomorphism $C_+(q)_\Rb$ with $\End_\Rb (W)$, where $W$ is a real vector space of dimension $2$.  Via the natural identification  $\End_\Rb (W)\cong W\otimes_{\Rb} W^\vee$, this identifies $\spin(q_\Rb)$ with $\SL(W)\times \SL (W^\vee)$. A  complex structure on $\End_\Rb (W)$ which preserves the  left $\End_\Rb (W)$-module structure amounts to a complex structure on $W^\vee$. The group $\SL(W^\vee)$ permutes these complex structures and two are in the same orbit if and only if they define the same orientation.
\end{proof}

We note that the generator $e$ of $L^{1,-1}_f$ satisfies
\[
J_f(e)=(e'+\sqrt{-1}e'')e'e''=e''-\sqrt{-1}e'=-\sqrt{-1}e, 
\]
so that $e$ lies in the  $-\sqrt{-1}$-eigenspace of $J_f$. It is in fact straightforward to verify that the $-\sqrt{-1}$-eigenspace of  $J_f$  in $C_+(q)_\Cb$ equals $C_-(q)_\Cb L^{1,-1}_f\subset C_+(q)_\Cb$. This implies that $C_-(q)_\Cb L^{1,-1}_f$ depends holomorphically on $f$. 
Hence so does the   complex torus
\[
A_f:= C_+(q)\bs C_+(q)_\Cb/C_-(q)_\Cb L^{1,-1}_f.
\]
Observe that its  first homology group $H_1(A_f)$ is naturally identified with the lattice underlying $C_+(q)$ (with its weight $-1$ Hodge structure); we may of course also think of $A_f$ as $C_+(q)\bs C_+(q)_\Rb$, where $C_+(q)_\Rb$ has the complex structure defined by $J_f$. 

Consider the map
\[
h: H_1(A_f)\times H_1(A_f)=C_+(q)^2\to C_+(q), \quad (a,b) \mapsto ab^*.
\] 
This can be regarded as a  $C_+(q)$-valued hermitian form on $H_1(A_f)$ with respect to the anti-involution $a\mapsto a^*$. The elements $h(a,b)-h(b,a)$ clearly lie in the 
$(-1)$-eigenspace of this involution in  $C_+(q)_\Qb$ and in fact span that space over $\Qb$ (for $h(a,1)-h(1,a)=a-a^*$). This is a subspace of $C_+(q)_\Qb$ of dimension $3$ which supplements $\Qb$.  If we identify $C_+(q)$ with $H_1(A_f)$ as above, then after dualization  we get the map
\[
E: C_+(q)^\vee\to H^1(A_f)\otimes H^1(A_f)\xrightarrow{\cup} H^2(A_f)
\] 
The preceding implies that $E$ has rank $3$.

\begin{lemma}\label{lemma:ns}
The left action of  $C_+(q)$ on itself yields an embedding of algebras $C_+(q)\hookrightarrow \End (A_f)$
and the image $E$ lies in  the Neron-Severi group of $A_f$.  
\end{lemma}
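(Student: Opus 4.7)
The plan is to treat the two assertions separately. For the embedding $C_+(q)\hookrightarrow \End(A_f)$, I would argue as follows. The complex structure $J_f$ on $C_+(q)_\Rb$ is, by construction, right multiplication by the element $j_f\in C_+(q)_\Rb$; in particular it commutes with every left-multiplication operator $L_c$, $c\in C_+(q)$. Since $C_+(q)$ trivially preserves its own lattice inside $C_+(q)_\Rb$, the assignment $c\mapsto L_c$ defines a ring homomorphism $C_+(q)\to \End(A_f)$. Its kernel is a two-sided ideal of $C_+(q)$, which is a division algebra by the previous lemma, so this ideal is zero.

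For the second assertion I would invoke the Lefschetz $(1,1)$-theorem and reduce to showing that every class in the image of $E$ is of Hodge type $(1,1)$. Viewed as an alternating $\Qb$-valued form $\omega$ on $H_1(A_f;\Qb)=C_+(q)_\Qb$, this is the condition $\omega(J_f v, J_f w)=\omega(v,w)$. By the dualization procedure used to define $E$, every form in the image has the shape
\[
\omega_\ell(a,b) \;=\; \ell\bigl(ab^*-ba^*\bigr),
\]
where $\ell$ is a linear functional on $C_+(q)_\Qb$, so it suffices to verify the type-$(1,1)$ identity for all such $\omega_\ell$.

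The crux is the behavior of $j_f$ under the Clifford anti-involution. Since $e',e''\in L_\Rb$ are orthogonal, they anticommute in $C(q)$; hence $j_f^*=(e'e'')^*=e''e'=-e'e''=-j_f$, and (with the implicit normalization $q(e')=q(e'')=1$ coming from the identity $\tfrac12 e\bar e = 1-\sqrt{-1}j_f$) one has $j_f^2=-1$. These two identities give
\[
(aj_f)(bj_f)^* \;=\; a\,j_f j_f^*\,b^* \;=\; -a\,j_f^2\,b^* \;=\; ab^*,
\]
and analogously $(bj_f)(aj_f)^*=ba^*$. Applying $\ell$ yields $\omega_\ell(J_f a, J_f b)=\omega_\ell(a,b)$, so the image of $E$ consists of $(1,1)$-classes, hence lies in $\mathrm{NS}(A_f)$.

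The only real obstacle is the bookkeeping in the Clifford algebra; once the identities $j_f^*=-j_f$ and $j_f^2=-1$ are in hand, both the commutation of $J_f$ with left multiplications (giving the embedding) and the $(1,1)$-verification (giving the Neron-Severi statement) become essentially one-line computations.
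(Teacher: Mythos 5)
Your proof is correct, and the second half takes a genuinely different (and equally valid) route from the paper. For the embedding, the paper simply declares it clear; you spell it out correctly via the observation that $J_f$ is right multiplication and so commutes with left multiplication, and that the kernel, being a two-sided ideal in a (rational) division algebra, must vanish. For the Néron--Severi assertion, the paper works on the complexified side: it uses the isotropy of the spanning vector $e$ of $L^{1,-1}_f$ to get $ee=q(e)=0$, so that $h(xe,ye)=xe\,e\,y^*=0$ on $C_-(q)_\Cb L^{1,-1}_f$, which is the $-\sqrt{-1}$-eigenspace of $J_f$. You instead verify the real criterion $\omega(J_f a,J_f b)=\omega(a,b)$ directly, via the Clifford identities $j_f^*=-j_f$ and $j_f^2=-1$. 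Both prove the $(1,1)$-condition in about the same number of lines; the paper's argument is a touch slicker (one isotropy identity), while yours has the advantage of not needing the earlier identification of the $-\sqrt{-1}$-eigenspace with $C_-(q)_\Cb L^{1,-1}_f$ and of making visible exactly which algebraic properties of $j_f$ are responsible. One small note: invoking the Lefschetz $(1,1)$-theorem is overkill here; for an abelian variety, $\mathrm{NS}(A_f)$ is by definition $H^{1,1}\cap H^2(A_f;\Zb)$, and integrality is already built into the construction of $E$ from the lattice $C_+(q)^\vee$.
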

\begin{proof}
The first part of this assertion is clear. For the second statement, it suffices to show that  
the image of $E$ lies in $H^{1,1}(A_f)$, or equivalently,  that $h$ is  zero on $C_-(q)_\Cb L^{1,-1}_f\times C_-(q)_\Cb L^{1,-1}_f$. But the vector $e$ which spans $L^{1,-1}_f$ is isotropic and so $ee=0$ in  $C(q)_\Cb$. It follows that $h(xe, ye)=xeey^*=0$ for all $x, y\in  C(q_\Cb)$.
\end{proof}

We thus obtain a family $\As_{\Db}$ over $\Db$ with an action of $\spin (q)$.  Note that the action of $\spin (q)$ on $\Db$ factors through  $\SO^\#(q)=\spin(q)/\{\pm1\}$ and that $-1\in\spin (q)$ acts in each $A_f$ as the natural involution. With the help of Lemma \ref{lemma:cxstructure} we then find:

\begin{theorem}\label{theorem:wiman:modular4a}
The orbit space of this action by $\hat \G_5$ yields a moduli stack of Kummer surfaces $\{\pm 1\}\bs\As_{C_0}\to C_0$. To be precise, it classifies abelian surfaces $A$  endowed with an isomorphism $\End(A)_{\Fb_5}\cong C_+(q_{\Fb_5})$ which is the mod 5 reduction of  an algebra isomorphism $\End(A)\cong C_+(q)$ and for which  $H_1(A; \Zb)$ is a principal $\End(A)$-module.
This affords $C_0$ with the structure of a Shimura curve of indefinite quaternionic type.
\end{theorem}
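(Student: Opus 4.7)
The plan is to verify, in three parts, that the construction of \S\ref{subsect:HS} realizes $C_0$ as the coarse moduli space of the stated data, and then to recognize the resulting moduli problem as a Shimura curve of indefinite quaternionic type. First I would check the descent of the family to $C_0$. By Lemma~\ref{lemma:homom3}, $\G_5$ is torsion-free and so acts freely on $\Db$, giving $C_0 = \G_5\bs\Db$ the structure of a smooth compact Riemann surface. The central element $-1 \in \hat\G_5 \subset \spin(q_\Rb)$ acts on $A_f = C_+(q)\bs C_+(q)_\Rb$ by left multiplication by $-1$ on the universal cover, which is exactly the standard involution $[-1]_{A_f}$. Hence $\{\pm 1\}\bs \As_\Db$ is a family of Kummer surfaces over $\Db$, and the residual $\G_5$-action is free and yields the claimed family $\{\pm 1\}\bs\As_{C_0}\to C_0$.

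Next I would establish the moduli interpretation in both directions. In the forward direction, each fiber $A_f$ comes with the embedding $\iota_f : C_+(q) \hookrightarrow \End(A_f)$ of Lemma~\ref{lemma:ns}, and this is an isomorphism by rank considerations, since $C_+(q)_\Qb$ is a division algebra acting $\Qb$-linearly on the $4$-dimensional space $C_+(q)_\Qb$. The homology $H_1(A_f;\Zb) = C_+(q)$ is tautologically a principal $C_+(q)$-module. In the reverse direction, suppose $(A,\iota,\phi)$ is a triple with $\iota : \End(A) \cong C_+(q)$ lifting the prescribed mod-$5$ isomorphism and with $\phi : H_1(A;\Zb) \cong C_+(q)$ a principal $\End(A)$-module identification. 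Transporting the complex structure on $A$ to $C_+(q)_\Rb$ via $\phi$ yields a complex structure $J$ which commutes with the left $C_+(q)_\Rb$-action, since $\End(A)$ acts holomorphically. Lemma~\ref{lemma:cxstructure} then gives $J = \pm J_f$ for a unique $f \in \Db$, and the sign is pinned down by requiring the hermitian form $(a,b)\mapsto ab^*$ to define a positive polarization (this is precisely the positivity already used in Lemma~\ref{lemma:ns}). Varying the pair $(\iota,\phi)$ within the allowed mod-$5$ gauge changes the point $f$ by the subgroup of $\spin(q)$ acting trivially on $C_+(q)_{\Fb_5}$ by conjugation; by the discussion preceding the theorem, this subgroup is $\hat\G_5$, so the resulting point of $\hat\G_5\bs\Db = C_0$ is well defined and gives the inverse map.

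Finally, the Shimura structure follows from the general PEL formalism in the spirit of Deligne. The algebraic group $G = C_+(q)^\times$ modulo center is the unit group of an indefinite quaternion $\Qb$-algebra, and $\Db$ is the corresponding connected symmetric space. The family of weight $-1$ Hodge structures on $C_+(q)$ parametrized by $\Db$ together with the left $C_+(q)$-action and the positive polarization produces the Shimura datum of PEL type attached to $C_+(q)$. Since $\hat\G_5$ is the principal level-$5$ congruence subgroup of the integral points of $\spin(q)$, the quotient $C_0 = \hat\G_5\bs\Db$ is by definition a Shimura curve of indefinite quaternionic type of level $5$.

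The step I expect to be the main obstacle is the precise identification of the level structure with the subgroup $\hat\G_5$: verifying that prescribing the mod-$5$ isomorphism $\End(A)_{\Fb_5}\cong C_+(q_{\Fb_5})$, together with the principal $\End(A)$-module structure on $H_1(A;\Zb)$, cuts the ambiguity of the pair $(\iota,\phi)$ down from $\spin(q)$ to exactly $\hat\G_5$, and that this is compatible with the $\{\pm 1\}$ acting as the standard involution on $A$ (so that the coarse base is $C_0$ rather than a double cover or a quotient). This bookkeeping is what makes the theorem, as stated, more precise than the informal version \textbf{Modular description of the Wiman curve} in the introduction.
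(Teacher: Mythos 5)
Your proposal is correct and follows the route the paper intends: the paper offers no separate proof of Theorem~\ref{theorem:wiman:modular4a} (it is presented as an immediate consequence of the preceding discussion, especially Lemma~\ref{lemma:cxstructure} and the remark that conjugation triviality on $C_+(q)\otimes\Fb_5$ pins down $\hat\G_5$ up to sign), and you have faithfully reconstructed the chain of reasoning: descent of the family through the central $\{\pm 1\}$, the forward and reverse moduli identifications via Lemmas~\ref{lemma:ns} and~\ref{lemma:cxstructure}, and the PEL/Shimura recognition.

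One small overstatement worth flagging: the claim that $\iota_f\colon C_+(q)\hookrightarrow\End(A_f)$ is an isomorphism ``by rank considerations'' is only true for non-CM fibers. At the countably many CM points $f\in\Db$, the commutant of $J_f$ in $\End_\Qb\bigl(C_+(q)_\Qb\bigr)$ is $8$-dimensional and $\End(A_f)_\Qb$ can jump to $M_2(K)$ for $K$ imaginary quadratic; the division-algebra argument shows only that the left $C_+(q)$-action is injective and has the expected size, not that it exhausts the full endomorphism ring at every point. The standard way to state the moduli problem (and what Theorem~\ref{theorem:wiman:modular4a} implicitly does) is to take as data the embedding $C_+(q)\hookrightarrow\End(A)$ together with the principal module and level-$5$ structures, rather than an equality $\End(A)=C_+(q)$; with that reading the rest of your argument goes through unchanged.
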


\bigskip{\noindent
Dept. of Mathematics, University of Chicago\\
E-mail: farb@math.uchicago.edu\\
\\
Yau Mathematical Sciences Center, Tsinghua University, Beijing (China), and Utrecht University\\
E-mail: e.j.n.looijenga@uu.nl

\end{document}